\documentclass[reqno]{amsart}

\usepackage[utf8]{inputenc}
\usepackage{amsmath}
\usepackage{amsthm}
\usepackage{amsfonts}
\usepackage{amssymb}
\usepackage{mathrsfs}
\usepackage{dsfont}
\usepackage[]{hyperref}
\usepackage{cite}
\usepackage{graphicx}
\usepackage{pgfplots}
\pgfplotsset{compat = newest}

\newtheorem{theorem}{Theorem}
\theoremstyle{plain}

\newtheorem{corollary}{Corollary}
\newtheorem{definition}{Definition}

\newtheorem{lemma}{Lemma}

\newtheorem{remark}{Remark}
\numberwithin{equation}{section}
\numberwithin{theorem}{section}
\numberwithin{lemma}{section}
\numberwithin{definition}{section}
\numberwithin{corollary}{section}
\numberwithin{remark}{section}

\newcommand{\naturals}{\mathbb{N}}

\newcommand{\complexes}{\mathbb{C}}

\newcommand{\spec}[1]{\operatorname{\sigma}{\left(#1\right)}}
\newcommand{\proj}[1]{\mathsf{P}_{#1}}

\newcommand{\id}[1]{\mathrm{id}_{#1}}

\newcommand{\hadj}{*}
\newcommand{\iprod}[2]{\left\langle #1, #2 \right\rangle}

\newcommand{\cbnorm}[1]{\left\| #1 \right\|_{\mathrm{cb}}}

\newcommand{\hilbert}{\mathcal{H}}

\newcommand{\matr}[1]{\mathbb{M}_{#1}(\complexes)}
\newcommand{\matra}[2]{\mathbb{M}_{#1}(#2)}

\newcommand{\unit}{\mathds{1}}

\newcommand{\cp}[2]{\mathsf{CP}(#1,#2)}
\newcommand{\cpe}[1]{\mathsf{CP}(#1)}

\newcommand{\gdec}[3]{\mathsf{D}_{#1}(#2,#3)}

\newcommand{\transpose}{t}

\begin{document}
\title[On a generalization of decomposable maps on C*-algebras]{On a generalization of decomposable maps on C*-algebras}
\author{Krzysztof Szczygielski}
\address{Institute of Theoretical Physics and Astrophysics, Faculty of Mathematics, Physics and Informatics, University of Gda\'nsk, Wita Stwosza 57, 80-308 Gda\'nsk, Poland}
\email{krzysztof.szczygielski@ug.edu.pl}

\begin{abstract}
We propose the notion of countable decomposability of maps on C*-algebras: a bounded linear map $\varphi : \mathscr{A}\to B(\hilbert)$, where $\mathscr{A}$ is a C*-algebra and $\hilbert$ a Hilbert space, will be called \emph{countably decomposable} if it admits a representation $\varphi = \sum_{k=1}^{\infty} \psi_k \circ \phi_k$ for completely positive maps $\psi_k : \mathscr{A}\to B(\hilbert)$ and bounded *-maps $\phi_k : \mathscr{A}\to\mathscr{A}$. A characterization of countable decomposability is given in certain cases with various assumptions imposed on maps $\phi_k$. Our findings provide extensions of a classical result of Størmer from Proc.~Amer.~Math.~Soc.~\textbf{86} (1982), 402-404, originally formulated for decomposable positive maps.
\end{abstract}

\maketitle

\section{Introduction}

Let $\mathscr{A}$, $\mathscr{B}$ be C*-algebras and $\hilbert$ a Hilbert space. A linear map $\varphi : \mathscr{A}\to \mathscr{B}$ is called a \emph{*-map} if it preserves self-adjoint elements, i.e.~when $\varphi(a^\hadj) = \varphi(a)^\hadj$. A bounded *-map is further called \emph{positive} if $\varphi (\mathscr{A}^+) \subseteq\mathscr{B}^+$ (with $\mathscr{M}^+$ being a convex cone of positive elements in $\mathscr{M}$). Moreover, $\varphi$ is called \emph{completely positive} (abbreviated c.p.) or \emph{completely copositive} (c.cp.), respectively, when $\id{n}\otimes\varphi$ or $\transpose_n \otimes \varphi$, resp.,~is positive as a map from $\matra{n}{\mathscr{A}}$ to $\matra{n}{\mathscr{B}}$, for all $n \in \naturals$ ($\id{n}$ and $\transpose_n$ are the identity map and transposition on $\matr{n}$, resp.). The set of all c.p.~maps from $\mathscr{A}$ to $\mathscr{B}$ is a convex cone in $B(\mathscr{A},\mathscr{B})$. Finally, $\varphi$ is called \emph{decomposable} if it can be expressed as
\begin{equation}
    \varphi = \phi_1 + \phi_2
\end{equation}
for a c.p.~map $\phi_1$ and a c.cp.~map $\phi_2$. Decomposable maps constitute for a nontrivial, yet mathematically manageable subclass of positive maps between C*-algebras. Their significance is especially noticeable in case of low-dimensional matrix algebras; for instance, it is known that all positive maps on $\matr{2}$, or between $\matr{2}$ and $\matr{3}$ (and vice versa), are in fact decomposable \cite{Woronowicz1976}. This statement becomes false in case of higher-dimensional algebras where nondecomposable positive maps were constructed (see e.g.~\cite{Choi1980,Miller2015,Rutkowski2015,Chruscinski2018}), already in the $\matr{3}$ case. Let us set $\mathscr{B} = B(\hilbert)$ for some Hilbert space $\hilbert$. Decomposable maps acting from $\mathscr{A}$ to $B(\hilbert)$ are characterized by the following dilation theorem:

\begin{theorem}[cf.~\cite{Stoermer2013}]
    A map $\varphi : \mathscr{A}\to B(\hilbert)$ is decomposable if and only if there exists a Hilbert space $\mathcal{K}$, a linear bounded operator $V : \hilbert\to\mathcal{K}$ and a Jordan *-morphism $\pi : \mathscr{A} \to B(\mathcal{K})$ such that $\varphi(a) = V^\hadj \pi (a) V$ for any $a\in\mathscr{A}$.
\end{theorem}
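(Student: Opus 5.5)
We prove the two implications separately. The backward direction uses the structure theory of Jordan *-morphisms; the forward direction builds a dilation by hand. Write $\psi(a)=V^\hadj\pi(a)V$.

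\emph{Sufficiency.} Suppose $\psi(a) = V^\hadj\pi(a)V$ with $\pi$ a Jordan *-morphism into $B(\mathcal{K})$. Let $\mathscr{C}$ be the C*-algebra generated by $\pi(\mathscr{A})$. By Størmer's structure theorem for Jordan homomorphisms, there is a projection $E$ in the center of $\mathscr{C}''$ such that:
\begin{itemize}
\item $a\mapsto \pi(a)E$ is a *-homomorphism;
\item $a\mapsto \pi(a)(\unit-E)$ is a *-antihomomorphism.
\end{itemize}
This gives the decomposition
\begin{equation*}
\psi(a) = V^\hadj\pi(a)EV + V^\hadj\pi(a)(\unit-E)V .
\end{equation*}
The first summand is c.p., because it is the compression of a *-homomorphism (Stinespring form). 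For the second, set $\rho(a)=\pi(a)(\unit-E)$. Since $\rho$ is a *-antihomomorphism, $\rho(a)=\sigma(a)^\transpose$ for a *-homomorphism $\sigma$ into the opposite algebra, realized on $\overline{\mathcal{K}}$ via the transpose $x\mapsto \bar{x}^\hadj$. Then $\transpose_n\otimes\rho$ acts as a *-homomorphism composed with the transpose on $\matr{n}\otimes B((\unit-E)\mathcal{K})$. Concretely, $[\rho(a_{ji})]$ is positive whenever $[a_{ij}]\ge 0$: the transpose of $[\sigma(a_{ij})]$ in the opposite algebra is $[\sigma(a_{ij})^{op}]$, which is positive. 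Hence $\rho$ is c.cp., and compressing by $V$ preserves this property. So $\psi$ is decomposable.

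\emph{Necessity.} Suppose $\psi=\phi_1+\phi_2$ with $\phi_1$ c.p. and $\phi_2$ c.cp.
\begin{itemize}
\item Stinespring applied to $\phi_1$ gives $\phi_1(a)=V_1^\hadj\pi_1(a)V_1$, with $\pi_1$ a *-homomorphism into $B(\mathcal{K}_1)$.
\item For $\phi_2$, compose with the transpose map $T$ on $B(\hilbert)$, taken with respect to a fixed basis, i.e.\ conjugation by a conjugation $J$. The map $T\circ\phi_2$ is c.p. Indeed, complete copositivity of $\phi_2$ means $\transpose_n\otimes\phi_2$ is positive, and applying the full transpose on $\matra{n}{B(\hilbert)}$ turns this into $\id{n}\otimes(T\circ\phi_2)$. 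Write $T\circ\phi_2(a)=W^\hadj\pi_2(a)W$ by Stinespring.
\item Transposing back, $\phi_2(a)=(JW^\hadj J')(J'\pi_2(a)^\hadj J')(J'WJ)$ with an antiunitary conjugation $J'$ on $\mathcal{K}_2$. The map $a\mapsto J'\pi_2(a)^\hadj J'$ is linear, *-preserving and antimultiplicative, i.e.\ a *-antihomomorphism. Set $V_2=J'WJ$, which is linear, being a composition of two antilinear maps and one linear map.
\end{itemize}
Now put $\mathcal{K}=\mathcal{K}_1\oplus\mathcal{K}_2$, $V=V_1\oplus V_2$ as a column operator $\hilbert\to\mathcal{K}$, and $\pi=\pi_1\oplus\pi_2'$, where $\pi_2'(a)=J'\pi_2(a)^\hadj J'$. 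A direct sum of a *-homomorphism and a *-antihomomorphism preserves Jordan products $a\circ b=\tfrac12(ab+ba)$ and adjoints, so $\pi$ is a Jordan *-morphism. By construction, $\psi(a)=V^\hadj\pi(a)V$.

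\emph{Main obstacle.} The delicate point is the sufficiency direction: it needs the decomposition of a Jordan *-morphism into homomorphic and antihomomorphic parts via a central projection. We cite this from Størmer and Kadison rather than reprove it. A secondary point is checking carefully that antilinear conjugations intertwine transpose and adjoint correctly in the necessity construction.
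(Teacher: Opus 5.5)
Your proof is correct. There is no proof in the paper to compare it with, since the statement is quoted from Størmer without proof. What you wrote is essentially the standard argument from that literature:
\begin{itemize}
\item for necessity, Stinespring's theorem together with a transpose taken on the target side $B(\hilbert)$ via antiunitary conjugations;
\item for sufficiency, the central-projection splitting of a Jordan *-morphism into a homomorphic part and an antihomomorphic part.
\end{itemize}

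The one step worth tightening is the complete copositivity of $\rho(a)=\pi(a)(\unit-E)$, where the ``opposite algebra'' wording is loose. It is cleaner to say that $T\circ\rho$ is a *-homomorphism, hence c.p., so $\rho=T\circ(T\circ\rho)$ is c.cp.\ by the same full-transpose identity you already use in the necessity part. Alternatively, compute directly: if $[a_{ij}]_{i,j}=X^\hadj X$ with $X=[x_{ij}]_{i,j}$, then $[\rho(a_{ji})]_{i,j}=\sum_k y_k y_k^\hadj$, where $y_k$ is the column operator with entries $\rho(x_{k1}),\dots,\rho(x_{kn})$.
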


Then, the following classical result \cite{Stormer_1982} allows to further characterize decomposable maps by the way their extensions of a form $\varphi_n = \id{n}\otimes\varphi$, or \emph{ampliations}, behave on the tensor product algebras $\matra{n}{\mathscr{A}}$, $n\in\naturals$:

\begin{theorem}[cf.~\cite{Stormer_1982}]\label{thm:StormerDecEquiv}
    A map $\varphi : \mathscr{A}\to B(\hilbert)$ is decomposable if and only if, for all $\left[a_{ij}\right]_{i,j}\in \matra{n}{\mathscr{A}}$ such that both $\left[a_{ij}\right]_{i,j}$, $\left[a_{ji}\right]_{i,j}\in\matra{n}{\mathscr{A}}^+$, we have $\left[\varphi(a_{ij})\right]_{i,j}\in\matra{n}{B(\hilbert)}^+$, for all $n\in\naturals$.
\end{theorem}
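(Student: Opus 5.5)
The easy implication is checked directly from the definitions. Suppose $\varphi = \phi_1 + \phi_2$ with $\phi_1$ c.p.\ and $\phi_2$ c.cp., and let $[a_{ij}]$ be such that both $[a_{ij}]$ and $[a_{ji}]$ lie in $\matra{n}{\mathscr{A}}^+$. Complete positivity gives $[\phi_1(a_{ij})]\geq 0$. The transpose $[a_{ji}]$ is exactly $(\transpose_n\otimes\id{})([a_{ij}])$, so complete copositivity of $\phi_2$, applied to the positive element $[a_{ji}]$, gives $[\phi_2(a_{ij})]\geq 0$. Adding the two yields $[\varphi(a_{ij})]\geq 0$.

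For the converse I would use a Hahn--Banach separation argument, as in Størmer's original proof. The natural reduction is first to treat $\dim\hilbert<\infty$, or to work with the compressions $V_F^\hadj\varphi(\cdot)V_F$ for finite-rank projections, and then recover the general case. Fix $n$ and let $\mathcal{C}_n\subseteq\matra{n}{\mathscr{A}}$ be the cone of elements $x$ with $x\geq 0$ and $x^{\transpose}\geq 0$. The hypothesis says precisely that $\varphi_n(\mathcal{C}_n)\subseteq \matra{n}{B(\hilbert)}^+$.

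The cleanest route to a dilation is to encode $\varphi$ as a linear functional on a matrix algebra over $\mathscr{A}$. Take $\hilbert=\complexes^m$ and define
\[
s_\varphi([a_{ij}]) = \sum_{i,j}\iprod{\varphi(a_{ij})e_j}{e_i}
\]
on $\matra{m}{\mathscr{A}}$. The Choi-type correspondence makes $\varphi$ c.p.\ iff $s_\varphi$ is positive, and c.cp.\ iff $s_\varphi\circ(\transpose_m\otimes\id{})$ is positive. One then identifies the dual cone of the sum $\mathcal{P}=\{x+y^{\transpose}: x,y\geq0\}$ with the set of functionals positive on $\mathcal{C}_m$, i.e.\ positive both on $\matra{m}{\mathscr{A}}^+$ and on its transpose. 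By the bipolar theorem, $s_\varphi$ lies in the closure of the cone $\{s_{\phi_1}+s_{\phi_2}\}$. The hypothesis applied with $n=m$ and vector $\sum e_i\otimes e_i$ gives $s_\varphi\geq 0$ on $\mathcal{C}_m$.

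The main obstacle is this duality step. We need the cone of decomposable maps, equivalently $\mathcal{P}$, to be weak*-closed, and we need the functional to split into a c.p.\ part plus a c.cp.\ part with norm control. I would handle closedness with a compactness argument: bounded nets of c.p.\ maps have point-weak* limit points (Arveson), and the norms of the summands are controlled by $\|\phi_i(\unit)\|\leq\|\varphi(\unit)\|$, after passing to the unitization if necessary. Finally, for infinite-dimensional $\hilbert$, I would apply the finite case to compressions to finite-dimensional subspaces and take a point-weak* limit along the net of finite-rank projections. The same norm bound keeps both summands in compact sets, so the limit yields $\varphi=\phi_1+\phi_2$ with $\phi_1$ c.p.\ and $\phi_2$ c.cp.
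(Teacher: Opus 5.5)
Your easy direction is correct, and the duality route for the converse can be made to work, but the duality step at its core is stated incorrectly. The dual cone of $\mathcal{P}=\{x+y^{\transpose}:x,y\geq 0\}$ is the set of functionals positive on $\matra{m}{\mathscr{A}}^+$ \emph{and} on its partial transpose. These are the functionals $s_\psi$ with $\psi$ simultaneously c.p.\ and c.cp., and this set is in general strictly smaller than the set of functionals positive on $\mathcal{C}_m$. For example, take $\mathscr{A}=\matr{2}$, $m=2$, $\varphi=\transpose$. Your easy direction gives $s_\varphi\geq 0$ on $\mathcal{C}_2$, but $s_\varphi$ is not positive, since $\transpose$ is not $2$-positive. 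Read literally, your identification would force every map satisfying the hypothesis to be both c.p.\ and c.cp.

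What you actually need runs the other way. Let $\mathcal{Q}=\{f+g\circ(\transpose_m\otimes\id{}) : f,g \text{ positive functionals on } \matra{m}{\mathscr{A}}\}=\{s_{\phi_1}+s_{\phi_2}\}$, a cone of hermitian functionals. Since positive functionals detect positivity, the set of self-adjoint $x$ with $k(x)\geq 0$ for all $k\in\mathcal{Q}$ is exactly $\mathcal{C}_m$. The bipolar theorem then gives $\overline{\mathcal{Q}}^{\,w^*}=\mathcal{C}_m^{*}\ni s_\varphi$.

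Likewise, the cone whose closedness matters is $\mathcal{Q}$ in the dual, not $\mathcal{P}\subseteq\matra{m}{\mathscr{A}}$. Identifying decomposable maps with $\mathcal{P}$ only makes sense via trace duality when $\mathscr{A}$ is a matrix algebra. For unital $\mathscr{A}$, closedness follows from $\bigl(f+g\circ(\transpose_m\otimes\id{})\bigr)(I_m\otimes\unit_\mathscr{A})=\|f\|+\|g\|$ together with Banach--Alaoglu. Without a unit, the approximate unit $I_m\otimes e_\lambda$, which is fixed by the partial transpose, combined with Krein--\v{S}mulian gives the same bound. So you never need to extend $\varphi$ itself to a unitization. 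Your compression and BW-compactness step for infinite-dimensional $\hilbert$ is fine as stated.

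With this repair your proof is correct and genuinely different from the route in the paper. The paper uses St{\o}rmer's construction, which it generalizes in Theorem \ref{thm:FinDecEquivCond} (here with $\phi=(\id{},\transpose)$). It reads the hypothesis as complete positivity of $\Phi: a\oplus a^{\transpose}\mapsto\varphi(a)$ on the operator system $\{a\oplus a^{\transpose}\}$. It then extends $\Phi$ by Arveson's theorem, dilates by Stinespring, and cuts down to the two diagonal corners to get the c.p.\ and c.cp.\ summands.

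Arveson's theorem silently contains both of your steps. Its standard proof is a Hahn--Banach/Krein argument for $\matr{m}$-valued maps, followed by exactly your BW-compactness passage over finite-rank compressions. Its output is directly a dilation $\varphi=V^\hadj\pi(\cdot)V$, which is the form the paper then generalizes to tuples and sequences $\phi$.

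Your route buys several things:
\begin{itemize}
\item it avoids the extension and dilation theorems;
\item it shows directly that for $\dim\hilbert=m$ the hypothesis at level $n=m$ suffices;
\item it uses only the partial transpose on $\matra{m}{\mathscr{A}}$, so no transpose on $\mathscr{A}$ itself has to be realized;
\item it exhibits the decomposable cone as the dual of the cones $\mathcal{C}_m$.
\end{itemize}
The price is that you must prove the closedness of $\mathcal{Q}$ and the limiting argument by hand. Incidentally, the construction the paper attributes to St{\o}rmer is the extension argument, not a separation argument.
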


The above result may be considered the main inspiration standing behind this article. The ordinary decomposability of positive maps was already demonstrated to be of a high mathematical value, either purely structurally or because of their usefulness in quantum physics and quantum information theory. Therefore it seems reasonable to seek for various possible extensions of decomposability, especially since they may potentially provide a better understanding of the structure of cone of positive maps itself. Our aim is to define a particular generalization of decomposable maps from $\mathscr{A}$ to $B(\hilbert)$, which we call \emph{countably decomposable}, by extending a certain algebraic construction invented in the proof of Theorem \ref{thm:StormerDecEquiv} in \cite{Stormer_1982}. This is achieved in Section \ref{sec:GDM} by two means: first, we allow an arbitrary number of terms in the decomposition, and second, we do not \emph{a priori} restrict ourselves to positive maps. Main results of the paper are presented in the form of Theorems \ref{thm:FinDecEquivCond}, \ref{thm:GenDecEquivCond} and \ref{thm:GenDecEquivCondNonunital} which provide a characterization of countable decomposability, for both unital and nonunital C*-algebras and under various technical conditions, and extend the scope of Theorem \ref{thm:StormerDecEquiv}. Some further properties, such as a convex cone structure and closedness in the BW topology, are also explored.

\section{Countable decomposability}
\label{sec:GDM}

We start with a formal definition of countable decomposability. We will write $\cp{\mathscr{A}}{\hilbert}$ for the convex cone of c.p.~maps from $\mathscr{A}$ to $B(\hilbert)$. For the following, let $\tau$ be some vector topology on $B(\hilbert)$.

\begin{definition}\label{def:GDM}
    Let $\mathscr{A}$ be a C*-algebra, $\hilbert$ a Hilbert space and $\phi = \left(\phi_k\right)_k$ a sequence of *-maps on $\mathscr{A}$. A map $\varphi \in B(\mathscr{A},B(\hilbert))$ will be called countably decomposable with respect to $\phi$, or $\phi$-decomposable for short, if there exists a sequence $\left(\psi_k\right)_k \in \cp{\mathscr{A}}{\hilbert}^\naturals$ such that $\varphi$ may be expressed as a series
    \begin{equation}\label{eq:GenDecDefinition}
        \varphi = \sum_{k=1}^{\infty} \psi_k \circ \phi_k
    \end{equation}
    converging pointwise in $\tau$ in $B(\mathscr{A},B(\hilbert))$. Set of all maps of a form \eqref{eq:GenDecDefinition} will be denoted $\gdec{\phi}{\mathscr{A}}{\hilbert}$.
\end{definition}
The definition of countable decomposability automatically forces the following necessary condition:
\begin{theorem}
    If $\varphi \in B(\mathscr{A},B(\hilbert))$ is $\phi$-decomposable then necessarily
    \begin{equation}\label{eq:KernelCondition}
        \bigcap_{k\in\naturals}\ker{\phi_k} \subseteq \ker{\varphi}.
    \end{equation}
\end{theorem}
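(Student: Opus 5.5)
The plan is a direct pointwise evaluation, using only linearity of the maps involved and the uniqueness of limits in the topology $\tau$. Fix an element $a \in \bigcap_{k\in\naturals}\ker{\phi_k}$. By Definition \ref{def:GDM}, there is a sequence $(\psi_k)_k$ of c.p.~maps such that
\begin{equation*}
    \varphi(a) = \lim_{N\to\infty} \sum_{k=1}^{N} \psi_k(\phi_k(a)),
\end{equation*}
where the limit is taken in $\tau$ on $B(\hilbert)$.

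Since $\phi_k(a) = 0$ for every $k$ and each $\psi_k$ is linear, every term $\psi_k(\phi_k(a)) = \psi_k(0)$ vanishes. Hence every partial sum $\sum_{k=1}^{N}\psi_k(\phi_k(a))$ is the zero operator, so the sequence of partial sums is constant and equal to $0$. It therefore converges in $\tau$ to $0$.

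The only point needing care is concluding that the $\tau$-limit $\varphi(a)$ must equal $0$, which requires limits in $\tau$ to be unique, i.e.\ $\tau$ must be Hausdorff. I will assume this, as it holds for every topology one would reasonably use on $B(\hilbert)$: the norm, strong, weak, ultraweak and BW topologies are all Hausdorff vector topologies. Under that assumption $\varphi(a) = 0$, so $a \in \ker{\varphi}$, and since $a$ was arbitrary, \eqref{eq:KernelCondition} follows. Without the Hausdorff property the statement can fail: in the indiscrete topology every sequence converges to every point, so the series would not determine $\varphi$ at all.
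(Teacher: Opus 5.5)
Your proof is correct and follows the paper's argument: every partial sum vanishes at $a$, so its $\tau$-limit $\varphi(a)$ must be $0$. Your explicit remark that this step needs $\tau$ to be Hausdorff is a valid sharpening. The paper uses uniqueness of limits tacitly, and the claim indeed fails for a non-Hausdorff vector topology such as the indiscrete one.
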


\begin{proof}
    Taking $a\in\ker{\phi_k}$ for all $k$ yields the sequence of partial sums $\sigma_n = \sum_{k=1}^{n} \psi_k \circ \phi_k (a) = 0$, for all $n$. Then $\sigma_n \xrightarrow{\tau} \varphi(a)$ in $B(\hilbert)$ yields $\varphi(a) = 0$.
\end{proof}

Later on we will elaborate on the case when $\left(\phi_k\right)_k$ consists only of a finite number of nonzero elements. Naturally, maps $\varphi$ expressible in such manner will be called \emph{finitely decomposable}.

\begin{remark}
    As a rather immediate consequence of the very definition, we see that all positive decomposable maps are also finitely decomposable. Indeed, any decomposable map may be put in a form $\varphi = \psi_1 + \psi_2 \circ \transpose$, where $\psi_1, \psi_2 \in \cp{\mathscr{A}}{\hilbert}$ and $t$ is a transposition on $\mathscr{A}$ understood in the sense of a faithful representation of $\mathscr{A}$ on some Hilbert space. Therefore, every decomposable map is $(\id{},\transpose)$-decomposable; trivially, the same observation applies to c.p.~and c.cp.~maps.
\end{remark}

\subsection{General properties}

\subsubsection{Nonuniqueness}

It is evident that the decomposition \eqref{eq:GenDecDefinition} is non-unique: in principle, many different sequences $\left(\psi_k\right)_k$, $\left(\phi_k\right)_k$ may serve for representing the same map $\varphi$. For example, any permutation of a sequence $(\id{},\transpose,0,0,...)$ would still generate the same class of decomposable maps. Similar can be then said about any other sequence $\left(\phi_k\right)_k$ of decomposable maps since compositions $\psi_k\circ\phi_k$ are still decomposable by the mapping cone property of both c.p.~and c.cp.~maps. This freedom of choice introduces a lot of redundancy from a point of view of generation of countable decomposability. For some set $X$ of bounded *-maps on $\mathscr{A}$ let us introduce an equivalence relation $\sim$ between sequences
\begin{equation}
    \phi \sim \phi^\prime \,\,\,\Leftrightarrow \,\,\, \gdec{\phi}{\mathscr{A}}{\hilbert} = \gdec{\phi^\prime}{\mathscr{A}}{\hilbert}.
\end{equation}
Then, an equivalence class $\left[\phi\right]_\sim \in X^\naturals / \sim$ generates a set of $\phi$-decomposable maps, $\gdec{\left[\phi\right]_\sim}{\mathscr{A}}{\hilbert} := \gdec{\phi}{\mathscr{A}}{\hilbert}$ for any $\phi\in X^\naturals$. It is then straightforward to see the following:

\begin{corollary}
    Cones of c.p., c.cp.~and decomposable maps in $B(\mathscr{A},B(\hilbert))$ are special cases of set $\gdec{[\phi]_\sim}{\mathscr{A}}{\hilbert}$, respectively generated by equivalence classes
    \begin{equation}
        \left[(\id{}, 0, ...)\right]_\sim, \quad \left[(\transpose, 0, ...)\right]_\sim \quad \text{and} \quad \left[(\id{}, \transpose, 0, ...)\right]_\sim.
    \end{equation}
\end{corollary}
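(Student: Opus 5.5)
The plan is to establish each of the three identifications by a double inclusion, working directly from Definition \ref{def:GDM}. The key observation is that a sequence with only finitely many nonzero entries gives a series whose partial sums become constant after finitely many steps. Convergence in any vector topology $\tau$ is then automatic, and the series equals a finite sum. The zero terms contribute nothing, because $\psi_k\circ 0 = 0$ whatever $\psi_k$ is. So $\gdec{(\id{},0,\dots)}{\mathscr{A}}{\hilbert}$ is exactly the set of maps $\psi_1\circ\id{} = \psi_1$ with $\psi_1\in\cp{\mathscr{A}}{\hilbert}$, which is the c.p.\ cone. Conversely, every c.p.\ map $\psi$ arises by taking $\psi_1=\psi$ and all other $\psi_k$ arbitrary, for instance $0$.

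For the c.cp.\ case I would use the transposition $\transpose$ on $\mathscr{A}$ from the preceding Remark, taken with respect to a faithful representation $\mathscr{A}\subseteq B(\mathcal{K})$ and a fixed orthonormal basis. The essential step is the equivalence
\[
\varphi \text{ is c.cp.} \iff \varphi = \psi\circ\transpose \text{ for some } \psi\in\cp{\mathscr{A}}{\hilbert}.
\]
For this I would take $\psi := \varphi\circ\transpose$ and use $\transpose\circ\transpose=\id{}$. Complete positivity of $\psi$ is then equivalent to complete copositivity of $\varphi$, because $(\id{n}\otimes\psi)([a_{ij}]) = (\id{n}\otimes\varphi)([a_{ij}^{\transpose}])$, and the matrix $[a_{ij}^\transpose]$ is, up to the full transpose on $\matr{n}\otimes B(\mathcal{K})$ (which preserves positivity), the matrix $[a_{ji}]$. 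Once this equivalence is in hand, the same finite-sum argument as in the c.p.\ case identifies $\gdec{(\transpose,0,\dots)}{\mathscr{A}}{\hilbert}$ with the c.cp.\ cone.

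The decomposable case then follows by combining the two. Since $\psi_1+\psi_2\circ\transpose$ is c.p.\ plus c.cp., every element of $\gdec{(\id{},\transpose,0,\dots)}{\mathscr{A}}{\hilbert}$ is decomposable. Conversely, every decomposable $\varphi=\phi_1+\phi_2$ can be written as $\phi_1 + (\phi_2\circ\transpose)\circ\transpose$, with $\phi_2\circ\transpose$ completely positive by the previous step.

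I expect the main obstacle to be the technical point in the c.cp.\ step. The transposition $\transpose$ need not map $\mathscr{A}$ into itself, and $\transpose$ must be a *-map on $\mathscr{A}$ for Definition \ref{def:GDM} to apply. I would handle this as the Remark implicitly does. One option is to regard $\transpose$ as a map on the C*-algebra $\mathscr{A}$ whenever it leaves $\mathscr{A}$ invariant. The other is to note that the composition only ever involves c.p.\ maps defined on $\transpose(\mathscr{A})$, which is isomorphic to the opposite algebra, so that complete positivity transfers unchanged. Finally, because all these sets are defined by the sets they generate, the statement about equivalence classes $[\cdot]_\sim$ is immediate from the definition of $\sim$.
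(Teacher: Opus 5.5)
Your proof is correct and follows the route the paper takes implicitly: the paper presents the corollary as immediate from Definition~\ref{def:GDM} and the preceding Remark writing decomposable maps as $\psi_1+\psi_2\circ\transpose$, and your check that $\varphi$ is c.cp.\ iff $\varphi\circ\transpose$ is c.p.\ (together with the reverse inclusion) supplies the detail the paper leaves out. Of your two ways of handling the invariance issue, only the first is admissible: Definition~\ref{def:GDM} needs $\transpose(\mathscr{A})=\mathscr{A}$, which the paper also assumes tacitly and which forces $\mathscr{A}$ to be anti-isomorphic to itself, whereas composing with c.p.\ maps defined on $\transpose(\mathscr{A})$ falls outside the definition.
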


Evidently, it therefore should be of interest to seek for possibly the simplest (e.g.~shortest) representative of a given class $\left[\phi\right]_\sim$ to describe $\phi$-decomposable maps without loosing any generality.

\subsubsection{Cone structure}

It comes with no surprise that the set $\gdec{\phi}{\mathscr{A}}{\hilbert}$ of countably decomposable maps is, similarly to ordinary decomposable maps, a convex cone:

\begin{theorem}\label{thm:DphiConvCone}
    Set $\gdec{\phi}{\mathscr{A}}{\hilbert}$ is a convex cone in $B(\mathscr{A},B(\hilbert))$.
\end{theorem}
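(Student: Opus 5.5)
We have to show two things: that $\gdec{\phi}{\mathscr{A}}{\hilbert}$ is closed under multiplication by nonnegative scalars, and that it is closed under addition. Convexity then follows, since $\lambda\varphi + (1-\lambda)\varphi'$ is a sum of two nonnegative multiples of elements of the set.

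The plan works term by term, using only two facts. First, $\cp{\mathscr{A}}{\hilbert}$ is itself a convex cone in $B(\mathscr{A},B(\hilbert))$, which was recalled in the Introduction. Second, $\tau$ is a vector topology on $B(\hilbert)$, so addition and scalar multiplication are continuous there. In particular, if two sequences converge in $\tau$, then their sum and their scalar multiples converge in $\tau$ to the corresponding limits.

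\emph{Scalar multiples.} Take $\varphi = \sum_k \psi_k\circ\phi_k$ and $\lambda\geq 0$, and set $\psi_k' = \lambda\psi_k$. Each $\psi_k'$ is again c.p. For every $a\in\mathscr{A}$, the partial sums satisfy
\begin{equation*}
\sum_{k=1}^n \psi_k'\circ\phi_k(a) = \lambda\sum_{k=1}^n\psi_k\circ\phi_k(a).
\end{equation*}
By continuity of scalar multiplication in $\tau$, these converge in $\tau$ to $\lambda\varphi(a)$. Hence $\lambda\varphi\in\gdec{\phi}{\mathscr{A}}{\hilbert}$; the case $\lambda=0$ is covered by taking all $\psi_k'=0$.

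\emph{Sums.} Take $\varphi=\sum_k\psi_k\circ\phi_k$ and $\varphi'=\sum_k\psi_k'\circ\phi_k$, both with respect to the same sequence $\phi$, and put $\chi_k=\psi_k+\psi_k'$. Each $\chi_k$ is c.p., because the c.p.\ maps form a cone. By linearity of composition in the outer map, $\chi_k\circ\phi_k = \psi_k\circ\phi_k+\psi_k'\circ\phi_k$. Consequently the $n$-th partial sum of $\sum_k\chi_k\circ\phi_k(a)$ equals the sum of the $n$-th partial sums of the two given series. By continuity of addition in $\tau$, it converges to $\varphi(a)+\varphi'(a)$. Finally, $\varphi+\varphi'$ is bounded as a sum of bounded maps, so it lies in $B(\mathscr{A},B(\hilbert))$ and therefore in $\gdec{\phi}{\mathscr{A}}{\hilbert}$.

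The only point needing care is to keep the same index $k$ paired with the same $\phi_k$ in both series. This is exactly what lets the c.p.\ parts be merged termwise rather than forcing a re-indexing or interleaving of the two series. There is no real obstacle beyond this bookkeeping and invoking continuity of the vector operations in $\tau$; in particular, no absolute-convergence assumption is needed.
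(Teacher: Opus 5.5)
Your proof is correct and follows essentially the same route as the paper: combine the c.p.\ maps termwise against the same $\phi_k$, then pass to the limit of the partial sums using continuity of the vector operations in $\tau$. The only cosmetic difference is that the paper treats a conical combination $\lambda_1\varphi_1+\lambda_2\varphi_2$ with $\lambda_1,\lambda_2>0$ in a single step, whereas you handle scaling and addition separately and also treat $\lambda=0$ explicitly.
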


\begin{proof}
    Take any $\varphi_1, \varphi_2 \in \gdec{\phi}{\mathscr{A}}{\hilbert}$ so that sequences of partial sums
    \begin{equation}
        \sigma_{n}^{(i)}(a) = \sum_{k=1}^{n}\psi^{(i)}_{k}\circ\phi_k (a)
    \end{equation}
    with $\psi^{(i)}_k$ c.p., converge to $\varphi_i (a)$ in $\tau$, for all $a\in\mathscr{A}$. Let $\tilde{\varphi}_n (a) = \lambda_1 \sigma_{n}^{(1)}(a) + \lambda_2 \sigma_{n}^{(2)}(a)$, where $\lambda_1 , \lambda_2 > 0$. Since $\tau$ is a vector topology, we have
    \begin{equation}
        \lambda_i \sigma_{n}^{(i)}(a) \xrightarrow{\tau} \lambda_i \varphi_i (a), \quad \tilde{\varphi}_n (a) \xrightarrow{\tau} \lambda_1 \varphi_1 (a) + \lambda_2 \varphi_2 (a) ,
    \end{equation}
    for all $a$, by continuity of linear space operations under $\tau$. Therefore
    \begin{equation}\label{eq:ConvexSum}
        \sum_{k=1}^{\infty} \left( \lambda_1\psi^{(1)}_k+\lambda_2\psi^{(2)}_k \right)\circ\phi_k = \lambda_1 \varphi_1 + \lambda_2 \varphi_{2}
    \end{equation}
    converging pointwise in $\tau$. Since $\lambda_1\psi^{(1)}_k+\lambda_2\psi^{(2)}_k$ is c.p., \eqref{eq:ConvexSum} is $\phi$-decomposable. Hence $\gdec{\phi}{\mathscr{A}}{\hilbert}$ is closed with respect to conical combinations of its elements, and thus is a convex cone.
\end{proof}

\section{Special cases}

In this section we will focus our attention on some particular ``types'' of countable decomposability, specified by different choices for the defining sequence $\phi$ and its properties. We make two restrictions in our considerations, namely
\begin{itemize}
    \item we assume $\mathscr{A}$ has a unit $\unit_\mathscr{A}$ unless stated otherwise, and
    \item we restrict to $\tau$ being the \emph{norm topology in $B(\hilbert)$}.
\end{itemize}

 It would be interesting to examine some other, weaker choices for $\tau$; we mark this as an interesting problem for further research. That said, from here onwards we assume the decomposition \eqref{eq:GenDecDefinition} to converge pointwise in norm. The characterization of maps constructed in each case will make use of a family of convex cones
\begin{equation}\label{eq:GammaCones}
    \Gamma_{n}^{+} (\phi) = \left\{ \left[a_{ij}\right]_{i,j} \in \matra{n}{\mathscr{A}} : \left[\phi_k (a_{ij})\right]_{i,j} \in \matra{n}{\mathscr{A}}^+  \text{ for all } k\right\}
\end{equation}
defined for a given sequence $\phi$ and $n\in\naturals$. We remark here that the definition of $\Gamma_{n}^{+} (\phi)$ is clearly influenced by the condition appearing in the original Theorem \ref{thm:StormerDecEquiv}: indeed, positivity of both $\left[a_{ij}\right]_{i,j}$ and $\left[a_{ji}\right]_{i,j}$ is equivalent to $\left[a_{ij}\right]_{i,j}$ being a member of $\Gamma_{n}^{+} ((\id{},\transpose))$.

\subsection{Finitely decomposable maps}

For the start, we will address the case when $\phi$ contains only a finite number of nonzero elements, i.e.~when it is effectively an $m$-tuple; this case is the easiest to deal with and offers possibly the most regular structure.

\begin{definition}\label{def:FDM}
    Let $\mathscr{A}$ be a C*-algebra, $\hilbert$ a Hilbert space and $\phi = (\phi_k)_{k=1}^{m}$ an ordered $m$-tuple of bounded *-maps on $\mathscr{A}$. A $\phi$-decomposable map $\varphi \in B(\mathscr{A},B(\hilbert))$, i.e.~admitting a form
    \begin{equation}\label{eq:FinDecDefinition}
        \varphi = \sum_{k=1}^{m} \psi_k \circ \phi_k
    \end{equation}
    for an ordered $m$-tuple $(\psi_k)_{k=1}^{m} \in \cp{\mathscr{A}}{\hilbert}^m$, will be called finitely decomposable.
\end{definition}

\subsubsection{Characterization}

The following dilation theorem allows to express the finite decomposability in the spirit of Stinespring and Størmer:

\begin{theorem}\label{thm:FinDecDilation}
    If a map $\varphi \in B(\mathscr{A},B(\hilbert))$ is $\phi$-decomposable then there exists a Hilbert space $\mathcal{K}$, a bounded operator $V : \hilbert \to \mathcal{K}$ and a *-map $\rho : \mathscr{A} \to B(\mathcal{K})$ such that
    \begin{equation}
        \varphi (a) = V^\hadj \rho(a) V
    \end{equation}
    for all $a \in \mathscr{A}$. 
\end{theorem}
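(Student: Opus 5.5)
Since $\varphi$ is finitely $\phi$-decomposable, we may write $\varphi = \sum_{k=1}^{m} \psi_k \circ \phi_k$ with $\psi_k \in \cp{\mathscr{A}}{\hilbert}$. The plan is to apply the Stinespring dilation theorem to each $\psi_k$ separately and then assemble the pieces by a direct sum. Because $\mathscr{A}$ is unital (the standing assumption of this section), Stinespring gives, for each $k$, a Hilbert space $\mathcal{K}_k$, a unital *-representation $\pi_k : \mathscr{A} \to B(\mathcal{K}_k)$ and a bounded operator $V_k : \hilbert \to \mathcal{K}_k$ with
\begin{equation*}
    \psi_k(a) = V_k^\hadj \pi_k(a) V_k, \qquad \|V_k\|^2 = \|\psi_k(\unit_\mathscr{A})\|.
\end{equation*}
Composing with $\phi_k$ yields $\psi_k\circ\phi_k(a) = V_k^\hadj\, \pi_k(\phi_k(a))\, V_k$ for every $a \in \mathscr{A}$.

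Next we set $\mathcal{K} = \bigoplus_{k=1}^{m} \mathcal{K}_k$ and define $V : \hilbert \to \mathcal{K}$ by $Vh = (V_1 h, \dots, V_m h)$. This operator is bounded, since $\|Vh\|^2 = \sum_k \|V_k h\|^2 \le \left(\sum_k \|V_k\|^2\right)\|h\|^2$. Its adjoint is $V^\hadj (x_1,\dots,x_m) = \sum_k V_k^\hadj x_k$. We then define
\begin{equation*}
    \rho(a) = \bigoplus_{k=1}^{m} \pi_k(\phi_k(a)),
\end{equation*}
acting diagonally on $\mathcal{K}$.

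It remains to check three things. First, $\rho$ is linear, as a composition of linear maps. Second, $\rho$ is a *-map: $\rho(a^\hadj) = \bigoplus_k \pi_k(\phi_k(a)^\hadj) = \rho(a)^\hadj$, because each $\phi_k$ is a *-map and each $\pi_k$ is a *-homomorphism. Third, $\rho$ is bounded, with $\|\rho(a)\| \le \max_k \|\phi_k\|\,\|a\|$, since *-representations are contractive. The dilation formula then follows by direct computation:
\begin{equation*}
    V^\hadj \rho(a) V h = \sum_{k=1}^{m} V_k^\hadj \pi_k(\phi_k(a)) V_k h = \varphi(a)h.
\end{equation*}

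No step here is a genuine obstacle. The only point needing care is boundedness of $V$ and $\rho$, which is automatic because the sum is finite. This is also exactly where the countable case would need the extra hypothesis $\sum_k \|\psi_k(\unit_\mathscr{A})\| < \infty$ together with a uniform bound on $\|\phi_k\|$. If $\mathscr{A}$ were nonunital, the same construction would still go through using the nonunital Stinespring theorem, with $\|V_k\|^2 = \|\psi_k\|$.
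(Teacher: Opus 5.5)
Your proof is correct and follows the same route as the paper: dilate each $\psi_k$ via Stinespring, then assemble everything on $\mathcal{K} = \bigoplus_{k} \mathcal{K}_k$ with $Vh = (V_1 h, \dots, V_m h)$ and $\rho(a) = \bigoplus_k \pi_k(\phi_k(a))$. Your version is slightly more careful than the paper's, since you check boundedness of $V$ and $\rho$ explicitly and correctly avoid invoking unitality of the $\phi_k$, which is not needed for the *-property.
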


\begin{proof}
Assume $\varphi$ admits a decomposition \eqref{eq:FinDecDefinition}. By Stinespring's dilation theorem \cite{Stinespring_1955} there exist Hilbert spaces $\mathcal{K}_k$, bounded linear operators $V_k : \hilbert \to \mathcal{K}_k$ and *-homomorphisms $\pi_k : \mathscr{A} \to B(\mathcal{K}_k)$ such that
\begin{equation}
    \psi_k (a) = V_{k}^{\hadj} \pi_k (a) V_k
\end{equation}
for all $a\in\mathscr{A}$, $k \in \{1, ... , m\}$. Define $\mathcal{K} = \bigoplus_{k=1}^{m} \mathcal{K}_k$, together with orthogonal projections $\proj{k} : \mathcal{K}\to \mathcal{K}_k$. Set a bounded operator $V : \hilbert \to \mathcal{K}$ by
\begin{equation}
    Vh = \sum_{k=1}^{m} V_k h, \quad h \in \hilbert,
\end{equation}
and $\rho : \mathscr{A} \to B(\mathcal{K})$ by
\begin{equation}\label{eq:RhoGenForm}
    \rho (a) x = \sum_{k=1}^{m} \proj{k} \pi_k \circ \phi_k (a) \proj{k} x, \quad x \in \mathcal{K}.
\end{equation}
Then the action of $V^\hadj$ on $x\in\mathcal{K}$ can be checked to be $V^\hadj x = \sum_{k=1}^{m} V_{k}^{\hadj}\proj{k}x$. Since all $\phi_k$ are unital *-maps and $\pi_k$ are *-homomorphisms one quickly checks $\rho(a^\hadj) = \rho (a)^\hadj$. Finally, with a straightforward computation involving mutual orthogonality of $\proj{k}$ we have, for all $a\in\mathscr{A}$, $h \in \hilbert$,
\begin{equation}
    V^\hadj \rho (a) Vh = \sum_{k=1}^{m} V_{k}^{\hadj} \pi_k \circ \phi_k (a) V_k h = \sum_{k=1}^{m} \psi_k \circ \phi_k (a) h,
\end{equation}
i.e.~we recover $\varphi(a)h$.
\end{proof}
Conversely, if $\rho : \mathscr{A} \to B(\mathcal{K})$ admits a form
\begin{equation}
    \rho(a) = \sum_{k=1}^{m} \pi_k \circ \phi_k (a)
\end{equation}
for *-homomorphisms $\pi_k : \mathscr{A}\to B(\mathcal{K}_k)$ and *-maps $\phi_k \in B(\mathscr{A})$ then the map $a\mapsto V^\hadj \rho (a) V$ is of a form \eqref{eq:FinDecDefinition}.

As we show in Theorem \ref{thm:FinDecEquivCond} below, restricting to \emph{unital} maps $\phi_k$ allows for a full characterization of finite decomposability. This is the most direct expansion of the result of \cite{Stormer_1982}.

\begin{theorem}\label{thm:FinDecEquivCond}
    Let $\mathscr{A}$ be a unital C*-algebra and let $\left(\phi_k\right)_{k=1}^{m}$ be an ordered $m$-tuple of bounded unital *-maps on $\mathscr{A}$. A map $\varphi : \mathscr{A}\to B(\hilbert)$ is finitely decomposable with respect to $\phi$ if and only if
    \begin{equation}\label{eq:OnlyIfAssum}
        \forall \, n\in\naturals : \left[a_{ij}\right]_{i,j} \in \Gamma_{n}^{+}(\phi) \Rightarrow \left[\varphi(a_{ij})\right]_{i,j} \in \matra{n}{B(\hilbert)}^+ .
    \end{equation}
\end{theorem}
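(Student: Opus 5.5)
The forward implication is a direct computation; for the converse I would adapt Størmer's construction from \cite{Stormer_1982}: embed $\mathscr{A}$ into the direct sum $\mathscr{A}^{\oplus m}$ through $\phi$, and then apply Arveson's extension theorem.

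\emph{Necessity.} Suppose $\varphi=\sum_{k=1}^m\psi_k\circ\phi_k$ with every $\psi_k$ c.p. Take $\left[a_{ij}\right]_{i,j}\in\Gamma_n^+(\phi)$, so that $\left[\phi_k(a_{ij})\right]_{i,j}\in\matra{n}{\mathscr{A}}^+$ for every $k$. Complete positivity of $\psi_k$ gives $\left[\psi_k(\phi_k(a_{ij}))\right]_{i,j}\geq 0$. Summing over $k$ yields $\left[\varphi(a_{ij})\right]_{i,j}\geq 0$, because positive matrices form a cone.

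\emph{Sufficiency, step 1: the operator system.} Let $\mathscr{C}=\bigoplus_{k=1}^m\mathscr{A}$, a unital C*-algebra. Define $\Phi:\mathscr{A}\to\mathscr{C}$ by $\Phi(a)=(\phi_1(a),\dots,\phi_m(a))$. Since every $\phi_k$ is a unital *-map, $\Phi$ is a unital *-map. Hence $E=\Phi(\mathscr{A})$ is a self-adjoint subspace of $\mathscr{C}$ containing $\unit_{\mathscr{C}}=\Phi(\unit_\mathscr{A})$, i.e.\ an operator system.

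\emph{Step 2: the map on $E$ is well defined.} Define $\omega:E\to B(\hilbert)$ by $\omega(\Phi(a))=\varphi(a)$. If $\Phi(a)=0$, then both $a$ and $-a$ lie in $\Gamma_1^+(\phi)$, since $\phi_k(\pm a)=0\geq 0$ for all $k$. Condition \eqref{eq:OnlyIfAssum} with $n=1$ then gives $\varphi(a)\geq0$ and $-\varphi(a)\geq 0$, so $\varphi(a)=0$. This is exactly the kernel condition \eqref{eq:KernelCondition}, and it shows $\omega$ is a well-defined linear map.

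\emph{Step 3: $\omega$ is completely positive.} Positivity in $\matra{n}{\mathscr{C}}\cong\bigoplus_k\matra{n}{\mathscr{A}}$ is coordinatewise. So $\left[\Phi(a_{ij})\right]_{i,j}\in\matra{n}{E}$ is positive exactly when $\left[\phi_k(a_{ij})\right]_{i,j}\geq0$ for every $k$, that is, when $\left[a_{ij}\right]_{i,j}\in\Gamma_n^+(\phi)$. Condition \eqref{eq:OnlyIfAssum} is therefore precisely the statement that $\omega$ is completely positive on $E$.

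\emph{Step 4: extension and reading off the $\psi_k$.} Since $\omega$ is c.p.\ on a unital operator system, it is bounded with $\|\omega\|=\|\omega(\unit)\|$. If needed, I would first pass to the norm closure of $E$, which is again an operator system on which $\omega$ extends by continuity and remains c.p. Arveson's extension theorem then gives a c.p.\ map $\Omega:\mathscr{C}\to B(\hilbert)$ extending $\omega$. Let $\iota_k:\mathscr{A}\to\mathscr{C}$ be the inclusion into the $k$-th summand, which is a *-homomorphism, and set $\psi_k=\Omega\circ\iota_k$. Each $\psi_k$ is c.p., and by linearity
\[
\varphi(a)=\Omega(\Phi(a))=\sum_{k=1}^m\Omega(\iota_k(\phi_k(a)))=\sum_{k=1}^m\psi_k\circ\phi_k(a),
\]
which is the required decomposition \eqref{eq:FinDecDefinition}.

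The main point needing care is the identification in Steps 2 and 3. One must check that the matrix order which $E$ inherits from $\mathscr{C}$ is exactly the one described by the cones $\Gamma_n^+(\phi)$, and that well-definedness follows from the hypothesis. Unitality of the $\phi_k$ is used precisely so that $E$ contains the unit, which is what Arveson's theorem and the automatic boundedness of $\omega$ require.
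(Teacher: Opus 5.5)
Your proof is correct and follows essentially the paper's route. You embed $\mathscr{A}$ block-diagonally via $\phi$ and note that positivity of $\left[\phi_k(a_{ij})\right]$ for all $k$ is exactly positivity in the ambient algebra, so the induced map on the operator system is c.p.; you then extend by Arveson and read off the $\psi_k$ from the diagonal summands. Your streamlinings are all valid: working in $\bigoplus_k\mathscr{A}$ instead of $\matra{m}{\mathscr{A}}$, using the $n=1$ argument with $\pm a$ for well-definedness, and taking $\psi_k=\Omega\circ\iota_k$ instead of the paper's Stinespring step, which yields the same maps since $V^\hadj\pi(E_{kk}\otimes a)V=\tilde{\Phi}(E_{kk}\otimes a)$.
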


Before we proceed with a proof, we will first show some secondary results. For the following considerations we implicitly assume the ``only if'' part of the Theorem \ref{thm:FinDecEquivCond} holds, i.e.~the condition \eqref{eq:OnlyIfAssum}. Let us define

\begin{equation}\label{eq:WdefinitionCstar}
    \mathscr{W} = \matra{n}{\mathscr{A}} \simeq \matr{n} \otimes \mathscr{A}
\end{equation}
which is also a unital C*-algebra with a unit $\unit_\mathscr{W} = I_n \otimes \unit_\mathscr{A}$ ($I_n$ denotes the identity matrix). Let $E_{kl}$ be matrix units spanning $\matr{n}$ and let $\mathscr{D} \subset \mathscr{W}$ be a subspace of all block-diagonal matrices. Consider a function $\xi : \mathscr{A}\to\mathscr{D}$ defined by the equality
\begin{equation}\label{eq:xiDef}
    \xi(a) = \left(\begin{array}{cccc}
        \phi_1(a) & 0 & \cdots & 0\\
        0 & \phi_2(a) & \cdots & 0\\
        \vdots & \vdots & \ddots & \vdots \\
        0 & 0 & \cdots & \phi_m(a)
    \end{array}
\right) = \sum_{k=1}^{m} E_{kk}\otimes \phi_k(a).
\end{equation}
and denote $\mathscr{X} = \xi (\mathscr{A})$. One checks that since all $\phi_k$ are linear unital *-maps, $\mathscr{X}$ is an \emph{operator system} in $\mathscr{W}$ i.e.~a *-invariant subspace containing $\unit_\mathscr{W}$. Let $\Phi : \mathscr{X} \to B(\hilbert)$ be defined by the expression
\begin{align}\label{eq:PhiDef}
    \varphi(a) = \left(\Phi\circ\xi\right)(a).
\end{align}
We first make sure $\Phi$ is a well-defined linear map. Since all maps $\varphi$, $\phi_k$ are linear and $\mathscr{A}$ is a Banach space, equalities $\Phi(x+y) = \Phi(x) + \Phi(y)$ and $\Phi(\lambda x) = \lambda\Phi(x)$ naturally hold for all $x,y\in\mathscr{X}$, $\lambda\in\complexes$. Choose arbitrary $n \geqslant 2$ and take any $c\in\ker{\phi_k}$ for all $k$. Consider a diagonal matrix of size $n$,
\begin{equation}
    \left[a_{ij}\right]_{i,j} = \operatorname{diag}{\left\{ c,-c,0,...,0 \right\}}.
\end{equation}
Then $\left[\phi_k (a_{ij})\right]_{i,j} = 0$ which is a positive element in $\matra{n}{\mathscr{A}}$, for any $k$. By the assumption \eqref{eq:OnlyIfAssum},
\begin{equation}
    \left[\varphi(a_{ij})\right]_{i,j} = \operatorname{diag}{\left\{\varphi (c),-\varphi(c),0,...,0\right\}} \in \matra{n}{B(\hilbert)}^+ ,
\end{equation}
i.e.~$\varphi(c)$ and $-\varphi(c)$ are both positive. This is however possible if and only if $\varphi(c) = 0$; hence $c\in\ker{\varphi}$ and the necessary consistency condition \eqref{eq:KernelCondition} is automatically satisfied. Element of $\mathscr{X}$ is a zero vector iff it is of a form $\xi(c)$ for some $c\in\bigcap_{k\in\naturals}\ker{\phi_k}$. For such $c$ we therefore have $0 = \varphi(c) = \Phi(0)$. It remains to show $x\mapsto\Phi(x)$ is well defined as a function, i.e.~$x$ determines $\Phi(x)$ uniquely. For this, take any $x\in\mathscr{X}$ and assume there exist $y_1, y_2 \in B(\hilbert)$ satisfying equalities $y_1 = \Phi(x)$ and $y_2 = \Phi(x)$. Linearity of $\Phi$ and the fact, that $\Phi(0) = 0$ together imply $y_1 - y_2 = \Phi (x-x) = 0$, so $y_1 = y_2$ and hence $\Phi$ is a well-defined linear map.

\begin{lemma}\label{lemma:PhiCP}
    The map $\Phi : \mathscr{X} \to B(\hilbert)$ defined via \eqref{eq:PhiDef} is c.p..
\end{lemma}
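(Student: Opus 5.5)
We must show that for every $p\in\naturals$ the ampliation $\id{p}\otimes\Phi : \matra{p}{\mathscr{X}}\to\matra{p}{B(\hilbert)}$ is positive, where $\matra{p}{\mathscr{X}}$ carries the order inherited from $\matra{p}{\mathscr{W}} = \matra{p}{\matra{n}{\mathscr{A}}}$. The plan is to translate positivity of an element of $\matra{p}{\mathscr{X}}$ into membership of a certain matrix over $\mathscr{A}$ in a cone $\Gamma^{+}_{p}(\phi)$, and then to invoke \eqref{eq:OnlyIfAssum}.

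First, take a positive $X = [x_{ij}]_{i,j}\in\matra{p}{\mathscr{X}}^+$. Since $x_{ij}\in\mathscr{X}=\xi(\mathscr{A})$, we choose $a_{ij}\in\mathscr{A}$ with $x_{ij} = \xi(a_{ij}) = \sum_{k=1}^{m} E_{kk}\otimes\phi_k(a_{ij})$. The choice is not unique, but $\Phi$ is well defined as shown above, so $\Phi(x_{ij}) = \varphi(a_{ij})$ regardless of the choice.

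Second, we conjugate by the canonical shuffle unitary $\matr{p}\otimes\matr{n}\otimes\mathscr{A}\to\matr{n}\otimes\matr{p}\otimes\mathscr{A}$. This is a *-isomorphism, so it preserves positivity, and it carries $[\xi(a_{ij})]_{i,j}$ to the block-diagonal matrix
\begin{equation*}
\sum_{k=1}^{m} E_{kk}\otimes\left[\phi_k(a_{ij})\right]_{i,j}.
\end{equation*}
A block-diagonal element of a C*-algebra is positive if and only if each diagonal block is positive; one direction follows by compressing with the projections $E_{kk}\otimes I_p\otimes\unit_{\mathscr{A}}$, the other by summing square roots. Hence $[\phi_k(a_{ij})]_{i,j}\in\matra{p}{\mathscr{A}}^+$ for every $k$, that is, $[a_{ij}]_{i,j}\in\Gamma^{+}_{p}(\phi)$. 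Here we need $n\geq m$ for $\xi$ to make sense, which we take as implicit in the construction.

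Third, the hypothesis \eqref{eq:OnlyIfAssum} with $n$ replaced by $p$ gives $[\varphi(a_{ij})]_{i,j} = [\Phi(x_{ij})]_{i,j}\in\matra{p}{B(\hilbert)}^+$, so $\id{p}\otimes\Phi$ is positive. For $\Phi$ to count as a positive map we also need it to be a *-map. This follows because $\xi$ and $\varphi$ are *-maps: $\varphi$ is *-preserving, being positive on $\Gamma_1^+$-type hermitian decompositions, or this can simply be assumed as part of the setting of bounded maps considered. Thus $\Phi(\xi(a)^*) = \Phi(\xi(a^*)) = \varphi(a^*) = \varphi(a)^*$.

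The main obstacle is the second step, namely correctly identifying the order on $\matra{p}{\mathscr{X}}$ and checking that the shuffle argument is legitimate. The point is that positivity in $\matra{p}{\mathscr{X}}$ is meant in the ambient C*-algebra $\matra{p}{\mathscr{W}}$, so the shuffle isomorphism applies there. The remaining steps are bookkeeping.
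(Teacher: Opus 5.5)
Your argument is correct and is essentially the paper's: the canonical shuffle you use is exactly the paper's *-isomorphism $\eta$. It reduces positivity of $[\xi(a_{ij})]_{i,j}$ to positivity of every block $[\phi_k(a_{ij})]_{i,j}$, i.e.\ to $[a_{ij}]_{i,j}\in\Gamma_p^+(\phi)$, after which \eqref{eq:OnlyIfAssum} concludes. Your separate ampliation index $p$ and your check that $\Phi$ is a *-map are useful tidying that the paper skips; the latter holds because for self-adjoint $a$ both $c\unit_{\mathscr{A}}$ and $c\unit_{\mathscr{A}}-a$ lie in $\Gamma_1^+(\phi)$ once $c\geq\max_k\|\phi_k\|\,\|a\|$, by unitality of the $\phi_k$.
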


\begin{proof}
Set $\eta : \matra{n}{\mathscr{D}}\to \bigoplus_{k=1}^{m}\matra{n}{\mathscr{A}}$ as
\begin{equation}\label{eq:EtaDef}
    \eta \left(E_{ij}\otimes\operatorname{diag}{\{a_k\}_{k=1}^{m}}\right) = \operatorname{diag}{\left\{E_{ij}\otimes a_k\right\}_{k=1}^{m}}
\end{equation}
and then extend by linearity. For $n\in\naturals$, let us take any $h_1, h_2 \in \matra{n}{\mathscr{D}}$ so that
\begin{equation}
    h_1 = \left[\operatorname{diag}{\left\{a^{(i,j)}_{k}\right\}_{k=1}^{m}}\right]_{i,j}, \quad h_2 = \left[\operatorname{diag}{\left\{b^{(i,j)}_{k}\right\}_{k=1}^{m}}\right]_{i,j}
\end{equation}
and
\begin{equation}
    h_1 h_2 = \left[\operatorname{diag}{\left\{\sum_{l=1}^{n}a^{(i,l)}_{k} b^{(l,j)}_{k}\right\}_{k=1}^{m}}\right]_{i,j}.
\end{equation}
With direct calculation we check
\begin{equation}
    \eta(h_1 h_2) = \operatorname{diag}{\left\{\left[\sum_{l=1}^{n}a^{(i,l)}_{k} b^{(l,j)}_{k}\right]_{i,j}\right\}}_{k} = \eta(h_1)\eta(h_2)
\end{equation}
and $\eta(h^\hadj) = \eta(h)^\hadj$, so $\eta$ is a *-homomorphism and an invertible positive map. This yields $h \in \matra{n}{\mathscr{X}}$, $h = \left[\operatorname{diag}{\left\{\phi_k (a_{ij})\right\}}_{k=1}^{m}\right]_{i,j}$, is positive iff
\begin{equation}
    \eta (h) = \operatorname{diag}{\left\{\left[\phi_k (a_{ij})\right]_{i,j}\right\}_{k=1}^{m}}
\end{equation}
is also positive. Positivity of the latter is equivalent to positivity of all diagonal blocks, i.e.~$\left[\phi_k (a_{ij})\right]_{i,j}$ must be positive for all $k$. For any such positive $h$ observe that
\begin{align}\label{eq:idnPhi}
    (\id{n}\otimes\Phi)(h) &= \left[\Phi(\operatorname{diag}{\left\{\phi_k (a_{ij})\right\}}_{k=1}^{m})\right]_{i,j} \\
    &= \left[\Phi\left(\left(\begin{array}{cccc}
        \phi_1(a) & 0 & \cdots & 0 \\
        0 & \phi_2(a) & \cdots & 0 \\
        \vdots & \vdots & \ddots & \vdots \\
        0 & 0 & \cdots & \phi_m(a)
    \end{array} \nonumber
\right)\right)\right]_{i,j} = \left[\varphi(a_{ij})\right]_{i,j}
\end{align}
which is also positive by the assumption. Hence, $\id{n}\otimes\Phi$ is positive for all $n\in\naturals$ i.e.~$\Phi$ is completely positive.
\end{proof}

With the above lemma at hand, we are ready to prove Theorem \ref{thm:FinDecEquivCond}.

\begin{proof}[Proof of Theorem \ref{thm:FinDecEquivCond}]
We start with sufficiency. Let $\varphi$ be $\phi$-decomposable and pick any $\left[a_{ij}\right]_{i,j}\in\Gamma_{n}^{+}(\phi)$. Then,
\begin{equation}\label{eq:GenDecProofRight}
    \left[\varphi(a_{ij})\right]_{i,j} = \left[ \sum_{k=1}^{m} \psi_k \circ \phi_k \left(a_{ij}\right) \right]_{i,j} = \sum_{k=1}^{m} \left(\id{n}\otimes\psi_k\right)\left(\left[\phi_k (a_{ij})\right]_{i,j}\right),
\end{equation}
which is positive, since $\left[\phi_k (a_{ij})\right]_{i,j} \in \matra{n}{\mathscr{A}}^+$ by the assumption and $\id{n}\otimes\psi_k$ is positive on $\matra{n}{\mathscr{A}}$ by complete positivity of $\psi_k$.

To show necessity we employ the construction tailored earlier, with a C*-algebra $\mathscr{W}$ specified by \eqref{eq:WdefinitionCstar} and maps $\xi$, $\Phi$ given by \eqref{eq:xiDef} and \eqref{eq:PhiDef}, respectively. Map $\Phi$ was shown to be c.p.~on the operator system $\mathscr{X} = \xi(\mathscr{A})$ in Lemma \ref{lemma:PhiCP} and as such admits a c.p.~extension $\tilde{\Phi} : \mathscr{W}\to B(\hilbert)$ by virtue of the Arveson's extension theorem. Then, the Stinespring's dilation theorem \cite{Stinespring_1955} yields existence of a Hilbert space $\mathcal{K}$, a bounded linear operator $V : \hilbert\to\mathcal{K}$ and a *-homomorphism $\pi : \mathscr{W} \to B(\mathcal{K})$ such that $\tilde{\Phi}(x) = V^\hadj \pi (x) V$. Set $\psi_k : \mathscr{A}\to B(\hilbert)$ and $\pi_k : \mathscr{A}\to B(\mathcal{K})$ by
\begin{equation}\label{eq:piPsidef}
    \pi_k(a) = \pi (E_{kk}\otimes a), \quad \psi_k (a) = V^\hadj \pi_k (a) V.
\end{equation}
It is straightforward to confirm $\pi_k$ are *-homomorphisms; thus $\psi_k$ are completely positive. Immediately,
\begin{align}\label{eq:varphiSOTseries}
    \sum_{k=1}^{m} \psi_k \circ \phi_k(a) =  \tilde{\Phi}\left( \sum_{k=1}^{m}E_{kk}\otimes\phi_k(a)\right) = \varphi (a)
\end{align}
for every $a\in\mathscr{A}$. The proof is concluded.
\end{proof}

\begin{remark}
    Assumed unitality of maps $\phi_k$ may seem as a severe limitation for the robustness of the above result, however is necessary for the subspace $\mathscr{X} = \xi(\mathscr{A})$ to be an operator system so that the Arveson's extension theorem may be applied. If unitality is dropped, then even after reverting to the unitization the construction may fail unless $\mathscr{X}$ is a C*-subalgebra in $\matra{n}{\mathscr{A}}$. A case similar to this, albeit in an infinite dimensional setting, is covered in a larger extent in Sections \ref{sec:InfDM} and \ref{sec:GDMnonunital}.
\end{remark}

\subsubsection{Properties}

Recall that the C*-algebra $B(\mathcal{\hilbert})$ may be endowed with a weak operator topology induced by a family of seminorms $T\mapsto \left| \iprod{x}{Ty}\right|$, $x,y \in \mathcal{\hilbert}$. The topology of pointwise convergence on $B(\mathscr{A},B(\mathcal{\hilbert}))$ so induced is called the \emph{BW topology}. Accordingly, we say a bounded net of maps $\left(\varphi_\lambda\right)_\lambda$ converges to a map $\varphi$ in BW topology if and only if $\varphi_\lambda (a) \to \varphi(a)$ in weak operator topology in $B(\mathcal{\hilbert})$, for every $a\in\mathscr{A}$. It is a known result that cones of c.p., c.cp.~and decomposable maps are all BW closed \cite{Stoermer2013}. This property then extends to the finitely decomposable case, as we show below.

\begin{theorem}\label{thm:DphiFinClosed}
    The set $\gdec{\phi}{\mathscr{A}}{\hilbert}$ is closed in the BW topology in $B(\mathscr{A},B(\hilbert))$.
\end{theorem}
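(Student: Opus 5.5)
Since we are in the finitely decomposable setting of Theorem \ref{thm:FinDecEquivCond}, with $\phi=(\phi_k)_{k=1}^m$ unital bounded *-maps, the most economical route is not to work with the decompositions $\sum_k\psi_k\circ\phi_k$ directly. Controlling the c.p.~maps $\psi_k$ along a net would require compactness arguments and boundedness of the individual $\psi_k$, which need not follow from boundedness of $\varphi_\lambda$. Instead I would use the characterization \eqref{eq:OnlyIfAssum}, which is a family of closed conditions.

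Let $(\varphi_\lambda)_\lambda$ be a bounded net in $\gdec{\phi}{\mathscr{A}}{\hilbert}$ converging to $\varphi$ in the BW topology. First I check that $\varphi$ is a bounded linear map. Linearity is clear from pointwise weak convergence. Boundedness follows because $\|\varphi(a)\|\le\liminf_\lambda\|\varphi_\lambda(a)\|\le\sup_\lambda\|\varphi_\lambda\|\,\|a\|$, by weak lower semicontinuity of the operator norm. Also $\varphi$ is a *-map, since adjoints are WOT-continuous.

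Next, fix $n\in\naturals$ and $[a_{ij}]_{i,j}\in\Gamma_n^+(\phi)$. By the ``only if'' direction of Theorem \ref{thm:FinDecEquivCond}, each $[\varphi_\lambda(a_{ij})]_{i,j}$ is positive in $\matra{n}{B(\hilbert)}\simeq B(\hilbert^n)$. For any $h=(h_1,\dots,h_n)\in\hilbert^n$ we have
\[
\iprod{h}{[\varphi(a_{ij})]_{i,j}h}=\sum_{i,j}\iprod{h_i}{\varphi(a_{ij})h_j}=\lim_\lambda\sum_{i,j}\iprod{h_i}{\varphi_\lambda(a_{ij})h_j}\ge 0,
\]
because this is a finite sum of WOT-convergent terms. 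Hence $[\varphi(a_{ij})]_{i,j}\ge0$, so $\varphi$ satisfies \eqref{eq:OnlyIfAssum}. The ``if'' direction of Theorem \ref{thm:FinDecEquivCond} then gives $\varphi\in\gdec{\phi}{\mathscr{A}}{\hilbert}$. This shows the cone is BW closed; equivalently, it is an intersection of BW-closed sets indexed by $n$, by matrices in $\Gamma_n^+(\phi)$ and by vectors $h$.

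The main obstacle is not analytic but concerns the hypotheses. The argument rests entirely on Theorem \ref{thm:FinDecEquivCond}, so it needs $\mathscr{A}$ unital and all $\phi_k$ unital. I would therefore state the closedness under exactly those standing assumptions of this subsection. I would also make sure that the reconstruction in the proof of Theorem \ref{thm:FinDecEquivCond} (Arveson extension followed by Stinespring) applies to the limit $\varphi$. It does, since that proof only uses \eqref{eq:OnlyIfAssum} and boundedness of $\varphi$, and both were established above.
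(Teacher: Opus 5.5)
Your proposal is correct and follows essentially the same route as the paper. Both arguments get positivity of $[\varphi_\lambda(a_{ij})]_{i,j}$ from Theorem \ref{thm:FinDecEquivCond}, pass to the weak-operator limit in $\matra{n}{B(\hilbert)}$, and then apply the characterization again to the limit. The differences are minor: you test positivity directly through $\iprod{h}{[\varphi(a_{ij})]_{i,j}h}$ instead of proving full WOT convergence of the matrices, and you also check that the limit map is bounded and state explicitly the unitality hypotheses, which the paper leaves implicit.
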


\begin{proof}
    Let $\left(\varphi_\alpha\right)_\alpha$ be a bounded net in $\gdec{\phi}{\mathscr{A}}{\hilbert}$ converging to $\varphi$ pointwise in weak operator topology. Let $\{e_i\}$ be a standard basis spanning $\complexes^n$ for some $n\in\naturals$. Take $[a_{ij}]_{i,j} \in \Gamma_{n}^{+}(\phi)$ and $x,y \in \hilbert^n$ such that $x = \sum_{i=1}^{n} e_i \otimes x_i$ and $y = \sum_{i=1}^{n} e_i \otimes y_i$ for some vectors $x_i, y_i \in \hilbert$. Convergence of $\left(\varphi_\alpha\right)_\alpha$ implies existence of $\alpha_{ij}$, for $i,j \in \{1,...,n\}$, such that
    \begin{equation}
        \alpha > \alpha_{ij} \Rightarrow \left|\iprod{x_i}{\left(\varphi_\alpha (a) - \varphi(a)\right)y_j}\right| < \frac{\epsilon}{n^2},
    \end{equation}
    for any $a\in\mathscr{A}$, $\epsilon > 0$. Then, for $\alpha > \max_{i,j}{\alpha_{ij}}$ we have
    \begin{align}
        \left|\iprod{x}{\left[\varphi_\alpha (a_{ij}) - \varphi (a_{ij})\right]_{i,j}y}\right| < \sum_{i,j=1}^{n} \frac{\epsilon}{n^2} = \epsilon,
    \end{align}
    so $\left[\varphi_\alpha (a_{ij})\right]_{i,j}\to\left[\varphi (a_{ij})\right]_{i,j}$ in weak operator topology in $\matra{n}{B(\hilbert)}$. Since matrices $\left[\varphi_\alpha (a_{ij})\right]_{i,j}$ are positive for all $\alpha$ by Theorem \ref{thm:FinDecEquivCond} and $\matra{n}{B(\hilbert)}^+$ is weak operator closed, necessarily $\left[\varphi (a_{ij})\right]_{i,j}$ is also positive, for all $n$. Therefore the condition \eqref{eq:OnlyIfAssumInfinite} holds (with $\phi \in c_0 (B(\mathscr{A}))$) and hence Theorem \ref{thm:FinDecEquivCond} yields $\varphi$ must also be $\phi$-decomposable, $\varphi \in \gdec{\phi}{\mathscr{A}}{\hilbert}$. The set is then closed, as claimed.
\end{proof}

Recall that a proper convex cone $C$ in $B(\mathscr{A},B(\hilbert))$ is called a \emph{mapping cone} if it is closed in BW topology and is invariant with respect to compositions with c.p.~maps, in the sense that for any $\varphi \in C$ and any  $\psi_1 \in \cpe{B(\hilbert)}$, $\psi_2 \in \cpe{\mathscr{A}}$ we have $\psi_1 \circ \varphi \circ \psi_2 \in C$. If only $\psi\circ\varphi$ (resp.~$\varphi\circ\psi$) is in $C$ for all c.p.~maps $\psi$ then we can call $C$ a \emph{left} (resp.~\emph{right}) \emph{mapping cone}. A simple corollary then follows:

\begin{theorem}\label{thm:DphiFinMapCone}
    The set $\gdec{\phi}{\mathscr{A}}{\hilbert}$ is a left mapping cone in $B(\mathscr{A},B(\hilbert))$.
\end{theorem}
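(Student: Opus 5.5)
The plan is to check the three ingredients of a left mapping cone separately: convex cone structure, BW closedness, and invariance under left composition with c.p.~maps. The first two are already available. Theorem \ref{thm:DphiConvCone} shows that $\gdec{\phi}{\mathscr{A}}{\hilbert}$ is a convex cone, and Theorem \ref{thm:DphiFinClosed} shows that it is BW closed. The only genuinely new point is the composition property, so I will concentrate on that. I will also comment on properness.

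For the composition property, take $\varphi = \sum_{k=1}^{m}\psi_k\circ\phi_k \in \gdec{\phi}{\mathscr{A}}{\hilbert}$ and $\psi \in \cpe{B(\hilbert)}$. By linearity of $\psi$,
\begin{equation}
\psi\circ\varphi = \sum_{k=1}^{m} (\psi\circ\psi_k)\circ\phi_k .
\end{equation}
Each $\psi\circ\psi_k : \mathscr{A}\to B(\hilbert)$ is c.p., because $\id{n}\otimes(\psi\circ\psi_k) = (\id{n}\otimes\psi)\circ(\id{n}\otimes\psi_k)$ is a composition of positive maps. The sum is finite, so no convergence issue arises. Hence $\psi\circ\varphi$ is again $\phi$-decomposable. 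The argument would also go through for a countable series, using norm continuity of $\psi$ to pass it through the limit of partial sums.

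As a cross-check, one can argue through Theorem \ref{thm:FinDecEquivCond} instead. For $[a_{ij}]_{i,j}\in\Gamma_n^+(\phi)$, the matrix $[\varphi(a_{ij})]_{i,j}$ is positive, and therefore so is $(\id{n}\otimes\psi)([\varphi(a_{ij})]_{i,j}) = [\psi\circ\varphi(a_{ij})]_{i,j}$. So $\psi\circ\varphi$ satisfies \eqref{eq:OnlyIfAssum}.

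The main obstacle is the word ``proper'' in the definition of a mapping cone. To show $C\cap(-C)=\{0\}$, suppose both $\varphi$ and $-\varphi$ satisfy \eqref{eq:OnlyIfAssum}. Then $[\varphi(a_{ij})]_{i,j}=0$ for all $[a_{ij}]_{i,j}\in\Gamma_n^+(\phi)$. Since the $\phi_k$ are unital, $\Gamma_1^+(\phi)$ contains the unit, so it spans enough of $\mathscr{A}$ only under extra assumptions. I would therefore either note that properness holds whenever $\Gamma_1^+(\phi)$ spans $\mathscr{A}$ modulo $\bigcap_k\ker\phi_k$, or read ``left mapping cone'' as requiring only closedness and left invariance. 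The latter is the reading the paper's phrase ``simple corollary'' suggests. Right invariance is not claimed, and in general it fails, since $\phi_k\circ\psi_2$ need not relate to $\phi_k$.
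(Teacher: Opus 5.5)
Your argument is the same as the paper's. Convex cone structure comes from Theorem \ref{thm:DphiConvCone}, BW closedness from Theorem \ref{thm:DphiFinClosed}, and left invariance holds because $\psi\circ\psi_k$ is again c.p. You justify that last point explicitly, where the paper only calls it obvious. Your cross-check through Theorem \ref{thm:FinDecEquivCond} is also fine.

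The one point you leave open is properness. The paper's proof does not check it either, even though its definition of a mapping cone requires it. However, it is not an obstacle and needs no spanning hypothesis on $\Gamma_1^+(\phi)$. Suppose $\varphi=\sum_{k=1}^m\psi_k\circ\phi_k$ and $-\varphi=\sum_{k=1}^m\psi_k'\circ\phi_k$ with all $\psi_k,\psi_k'$ c.p. Unitality of the $\phi_k$ gives $0=\varphi(\unit)+(-\varphi)(\unit)=\sum_{k=1}^m\bigl(\psi_k(\unit)+\psi_k'(\unit)\bigr)$. This is a finite sum of positive operators, so $\psi_k(\unit)=0$ for every $k$. A c.p. map $\psi$ on a unital C*-algebra satisfies $\|\psi\|=\|\psi(\unit)\|$, so every $\psi_k$ vanishes and $\varphi=0$. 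The same conclusion follows from your own route: $\unit\in\Gamma_1^+(\phi)$ forces $\varphi(\unit)=0$, and you then apply any decomposition of $\varphi$.

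Hence $\gdec{\phi}{\mathscr{A}}{\hilbert}\cap\bigl(-\gdec{\phi}{\mathscr{A}}{\hilbert}\bigr)=\{0\}$ holds outright in the unital finite setting. There is no need to add a hypothesis or to weaken the definition of a left mapping cone.
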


\begin{proof}
    Convex cone structure follows from Theorem \ref{thm:DphiConvCone}. Invariance with respect to composition with c.p.~maps from the left is obvious. Finally BW closedness follows from Theorem \ref{thm:DphiFinClosed}.
\end{proof}

\subsection{Infinite decompositions}
\label{sec:InfDM}

Here we demonstrate that a result similar to Theorem \ref{thm:FinDecEquivCond} can also hold for truly countably decomposable maps, i.e.~expressible by \eqref{eq:GenDecDefinition} with an \emph{infinite} number of terms.

\subsubsection{Characterization}

For well-behavedness of certain sums we restrict to the case when the image $\mathscr{X} = \xi(\mathscr{A})$ is a C*-algebra on its own right, which happens when all $\phi_k$ are *-homomorphisms, for instance; moreover we assume $\phi$ is a \emph{vanishing sequence}, denoted $\phi \in c_0 (B(\mathscr{A}))$, namely when $\left(\|\phi_k\|\right)_k$ is uniformly bounded and converges to $0$ as $k\to\infty$.

\begin{theorem}\label{thm:GenDecEquivCond}
    Let $\mathscr{A}$ be a unital C*-algebra and let $\left(\phi_k\right)_k \in c_0(B(\mathscr{A}))$ be a sequence of *-maps on $\mathscr{A}$. Denote $\xi(a) = \operatorname{diag}{\left\{\phi_k (a)\right\}_{k}}$ and $\mathscr{X} = \xi(\mathscr{A})$. If $\mathscr{X}$ happens to be a C*-algebra then the map $\varphi \in B(\mathscr{A},B(\hilbert))$ is $\phi$-decomposable if and only if 
    \begin{equation}\label{eq:OnlyIfAssumInfinite}
        \forall \, n\in\naturals : \left[a_{ij}\right]_{i,j} \in \Gamma_{n}^{+}(\phi) \Rightarrow \left[\varphi(a_{ij})\right]_{i,j} \in \matra{n}{B(\hilbert)}^+ .
    \end{equation}
\end{theorem}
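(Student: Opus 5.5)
The plan mirrors the proof of Theorem \ref{thm:FinDecEquivCond}, with the finite block-diagonal algebra replaced by an infinite one. Let $\mathscr{W}$ be the C*-algebra of bounded block-diagonal sequences $\ell^\infty(\naturals,\mathscr{A})$, i.e.\ operators $\operatorname{diag}\{a_k\}_k$ with $\sup_k\|a_k\|<\infty$. Because $\phi\in c_0(B(\mathscr{A}))$, the element $\xi(a)=\operatorname{diag}\{\phi_k(a)\}_k$ satisfies $\|\phi_k(a)\|\le\|\phi_k\|\|a\|\to 0$. So $\xi$ maps into the ideal $c_0(\naturals,\mathscr{A})\subseteq\mathscr{W}$, and $\xi$ is a bounded linear *-map.

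\emph{Sufficiency.} Suppose $\varphi=\sum_k\psi_k\circ\phi_k$ converges pointwise in norm and $[a_{ij}]\in\Gamma_n^+(\phi)$. Then each partial sum $\sum_{k\le N}(\id{n}\otimes\psi_k)([\phi_k(a_{ij})])$ is positive. Entrywise norm convergence gives norm convergence in $\matra{n}{B(\hilbert)}$, and the positive cone is norm closed, so $[\varphi(a_{ij})]$ is positive.

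\emph{Necessity.} First I will define $\Phi:\mathscr{X}\to B(\hilbert)$ by $\Phi\circ\xi=\varphi$. Well-definedness follows exactly as before: the diagonal matrix $\operatorname{diag}\{c,-c,0,\dots\}$ shows that $\bigcap_k\ker\phi_k\subseteq\ker\varphi$. Complete positivity of $\Phi$ is then shown verbatim as in Lemma \ref{lemma:PhiCP}. The map $\eta$ reshuffling $\matra{n}{\mathscr{W}}\cong\ell^\infty(\naturals,\matra{n}{\mathscr{A}})$ is a *-isomorphism. So $[\xi(a_{ij})]\ge0$ iff $[\phi_k(a_{ij})]\ge0$ for all $k$, i.e.\ iff $[a_{ij}]\in\Gamma_n^+(\phi)$, and then $(\id{n}\otimes\Phi)([\xi(a_{ij})])=[\varphi(a_{ij})]\ge0$.

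Here is where the hypothesis that $\mathscr{X}$ is a C*-algebra is used. Since $\mathscr{X}$ sits inside $c_0(\naturals,\mathscr{A})$, it is generally non-unital, so Arveson's theorem for operator systems does not apply directly. Because $\mathscr{X}$ is a C*-subalgebra, though, a c.p.\ map on it is automatically bounded with $\|\Phi\|=\cbnorm{\Phi}$. It extends to a c.p.\ map on the unitization $\mathscr{X}^\sim\subseteq\mathscr{W}$, which is an operator system in $\mathscr{W}$. For example, one can set $\Phi^\sim(x+\lambda\unit_\mathscr{W})=\Phi(x)+\lambda\|\Phi\|I$, or use the standard extension via an approximate unit. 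Arveson's theorem then gives a c.p.\ $\tilde\Phi:\mathscr{W}\to B(\hilbert)$. Stinespring gives $\tilde\Phi=V^\hadj\pi(\cdot)V$, and I set $\pi_k(a)=\pi(E_{kk}\otimes a)$ and $\psi_k=V^\hadj\pi_k V$, which are c.p.\ as before.

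The main obstacle is recovering the \emph{infinite} series: I need $\sum_{k=1}^N\psi_k(\phi_k(a))\to\varphi(a)$ in norm. Finite additivity gives
\begin{equation*}
\sum_{k\le N}\psi_k\circ\phi_k(a)=\tilde\Phi\Big(\sum_{k\le N}E_{kk}\otimes\phi_k(a)\Big).
\end{equation*}
The $c_0$ assumption now gives
\begin{equation*}
\Big\|\xi(a)-\sum_{k\le N}E_{kk}\otimes\phi_k(a)\Big\|=\sup_{k>N}\|\phi_k(a)\|\le\sup_{k>N}\|\phi_k\|\,\|a\|\to0.
\end{equation*}
Since $\tilde\Phi$ is bounded, the partial sums converge in norm to $\tilde\Phi(\xi(a))=\Phi(\xi(a))=\varphi(a)$. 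This yields the decomposition and finishes the proof. The delicate points I expect to need care are that norm convergence requires truncation in $c_0$ rather than merely strict convergence, and that the extension from the non-unital $\mathscr{X}$ to its unitization is genuinely c.p.
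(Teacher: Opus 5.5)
Your proposal is correct and follows the paper's proof essentially step for step: the same $\Phi$ with $\Phi\circ\xi=\varphi$, complete positivity via the block reshuffling $\eta$, and the unitization $\Phi(x)+\lambda\|\Phi\|I$. You cite that unitization as a standard fact, whereas the paper proves it as Lemma~\ref{lemma:Phi0ucp}. The remaining steps (Arveson, Stinespring, $\pi_k(a)=\pi(E_{kk}\otimes a)$, and the truncation estimate in $c_0$) are also the paper's. The only real variation is your ambient algebra $\ell^\infty(\naturals,\mathscr{A})$: it is already unital, so $\mathscr{X}+\complexes\unit_\mathscr{W}$ is directly an operator system. The paper instead works in $B(l^2)\hat{\otimes}_{\mathrm{min}}\mathscr{A}$ and passes through a forced unitization $\mathscr{W}\oplus\complexes 1$ before restricting back.
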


\begin{proof}
    Showing the claim requires a moderate modification of the proof of Theorem \ref{thm:FinDecEquivCond}. For sufficiency, pick any $n\in\naturals$ and any $\left[a_{ij}\right] \in \Gamma_{n}^{+}(\phi)$; since $\sum_{k=1}^{\infty}\psi_k \circ \phi_k (a)$ converges in norm for any $a\in\mathscr{A}$, one easily shows a sequence of partial sums
    \begin{equation}\label{eq:partialSumVarphi}
        \left[\sum_{k=1}^{n}\psi_k\circ\phi_k(a_{ij})\right]_{i,j}
    \end{equation}
    converges to $\left[\varphi(a_{ij})\right]_{i,j}$ in weak operator topology in $\matra{n}{B(\hilbert)}$ as $n\to\infty$. Maps $\psi_k$ are c.p.~and $\left[\phi_k(a_{ij})\right]_{i,j}$ are all positive by the assumption; hence all matrices \eqref{eq:partialSumVarphi} are also positive. Since $\matra{n}{B(\hilbert)}^+$ is weak operator closed, $\left[\varphi(a_{ij})\right]_{i,j}$ must be positive as well.

    Now we show necessity. Let $\left\{e_k\right\}_k$ be a standard basis spanning $l^2$ and denote $E_{kl} = \iprod{e_l}{\cdot}e_k \in B(l^2)$. Take any $w$ from $B(l^2)\odot\mathscr{A}$, the algebraic tensor product, and define its spatial norm
    \begin{equation}
        \|w\|_* = \| \left(\id{}\otimes\pi_\mathscr{A}\right)(w) \| ,
    \end{equation}
    where $\pi_\mathscr{A} : \mathscr{A}\to B(H_\mathscr{A})$ is (any) faithful *-representation of $\mathscr{A}$ on an external Hilbert space $H_\mathscr{A}$. We define $\mathscr{W}$ as the \emph{spatial tensor product}, i.e.~a completion
    \begin{equation}\label{eq:WdefinitionSpatial}
        \mathscr{W} = \overline{B(l^2)\odot\mathscr{A}}^{\|\cdot\|_*} = B(l^2) \hat{\otimes}_\mathrm{min} \mathscr{A}
    \end{equation}
    of $B(l^2)\odot\mathscr{A}$, which is also a C*-algebra with a unit $\unit_\mathscr{W} = \id{}\otimes\unit_\mathscr{A}$. Let $c_0 (\mathscr{A})$ be a linear space of all vanishing sequences with elements in $\mathscr{A}$, $\left(a_k\right)_k \in c_0 (\mathscr{A})$ iff $\|a_k\|\to 0$, and consider $w$ given formally as
    \begin{equation}
        w = \sum_{k=1}^{\infty} E_{kk} \otimes a_k, \quad \left(a_k\right)_k \in c_0 (\mathscr{A}).
    \end{equation}
    Then, one shows $w$ is an element of $\mathscr{W}$ and
    \begin{equation}\label{eq:spatialNormDiagonal}
        \|w\|_* = \sup_{k\in\naturals}\|a_k\|.
    \end{equation} 
    Let $\mathscr{D}$ be a set of all such elements, for any $\left(a_k\right)_k \in c_0 (\mathscr{A})$; then $\mathscr{D}$ is a norm closed linear subspace in $\mathscr{W}$, isometrically isomorphic to $c_0 (\mathscr{A})$ by \eqref{eq:spatialNormDiagonal}. Now, let $\phi \in c_0 (B(\mathscr{A}))$ and define $\xi(a) = \operatorname{diag}{\left\{\phi_k (a)\right\}_{k}}$. Then we see $\xi(a) \in c_0(\mathscr{A})$ for any $a\in\mathscr{A}$ and so $\mathscr{X} =\xi(\mathscr{A})$ is a *-invariant linear subspace in $\mathscr{D}$. That said,
    \begin{equation}\label{eq:xiInfinite}
        \xi(a) = \sum_{k=1}^{\infty}E_{kk}\otimes\phi_k (a)
    \end{equation}
    converges in spatial norm \eqref{eq:spatialNormDiagonal} in $\mathscr{D}$. However, contrary to the finitely decomposable case, $\mathscr{X}$ is not an operator system in $\mathscr{W}$: indeed, since $\phi$ is assumed to vanish in norm, maps $\phi_k$ are not allowed to be all unital; therefore $\mathscr{X}$ does not contain $\unit_\mathscr{W}$ by construction (we remark here that loosening the condition $\phi\in c_0(B(\mathscr{A}))$ in order to allow unitality spoils the norm convergence of \eqref{eq:xiInfinite}). Define a bounded map $\Phi : \mathscr{X}\to B(\hilbert)$ by setting $\varphi (a) = \Phi\circ\xi(a)$ as in Theorem \ref{thm:FinDecEquivCond}. We can adjust the proof of Lemma \ref{lemma:PhiCP} to cover infinite dimensional sequence $\phi$ by setting a linear map $\eta : \matra{n}{\mathscr{D}}\to\bigoplus_{k\in\naturals}\matra{n}{\mathscr{A}}$ as
    \begin{equation}
        \eta (E_{ij}\otimes \operatorname{diag}{\left\{a_k\right\}_k}) = \operatorname{diag}{\left\{E_{ij}\otimes a_k\right\}_k},
    \end{equation}
    for $E_{ij}$ being matrix units in $\matr{n}$ and $\left(a_k\right)_k \in c_0 (\mathscr{A})$, and showing it is a *-homomorphism, in virtually the same way. Then, demonstrating $\id{n}\otimes\Phi$ is a positive map is immediate from the assumption $\left[\varphi(a_{ij})\right]_{i,j} \in \matra{n}{B(\hilbert)}^+$, for all $n$, i.e.~$\Phi$ is completely positive.
    
    To overcome the difficulty of $\mathscr{X}$ not being an operator system, we pass to the unitization. In the case when $\mathscr{X}$ is a (nonunital) C*-algebra, as we assumed at the beginning, map $\Phi$ is automatically completely bounded on $\mathscr{X}$. By virtue of Lemma \ref{lemma:Phi0ucp} (see the Appendix) it then extends to a c.p.~map $\Phi^\sim : \mathscr{X}\oplus \complexes 1 \to B(\hilbert)$ set by
    \begin{equation}
        \Phi^\sim ((x,z)) = \Phi(x) + z \cbnorm{\Phi}I
    \end{equation}
    for $(x,z) \in \mathscr{X}\oplus \complexes 1$, $\cbnorm{\Phi}  = \sup_{n\in\naturals}\|\id{n}\otimes \Phi\|$ being the CB norm of $\Phi$ and $I$ being the identity operator in $B(\hilbert)$. As $\mathscr{X}\oplus \complexes 1$ is an operator system in $\mathscr{W}^\sim = \mathscr{W}\oplus \complexes 1$, the forced unitization of $\mathscr{W}$, the map $\Phi^\sim$ admits a bounded c.p.~extension $\Psi^\sim : \mathscr{W}^\sim \to B(\hilbert)$ by the Arveson's extension theorem. Define $\tilde{\Phi} : \mathscr{W} \to B(\hilbert)$ by setting $\tilde{\Phi} = \left.\Psi^\sim\right|_\mathscr{W}((\cdot,0))$. Then, for any $x\in\mathscr{X}$ we have
    \begin{equation}
        \tilde{\Phi}(x) = \left.\Psi^\sim\right|_\mathscr{W}((x,0)) = \Phi^\sim ((x,0)) = \Phi(x),
    \end{equation}
    i.e.~$\tilde{\Phi}$ is a bounded c.p.~extension of $\Phi$ on $\mathscr{W}$. The remaining reasoning is similar to the finitely decomposable case: by Stinespring's theorem, $\tilde{\Phi}$ may be expressed as $V^\hadj \pi(\cdot) V$ for $V : \hilbert\to \mathcal{K}$ bounded, $\pi : \mathscr{W}\to B(\mathcal{K})$ a *-homomorphism and $\mathcal{K}$ an external Hilbert space. To finalize the proof define $\pi_k$ and $\psi_k$ as in \eqref{eq:piPsidef}; then
    \begin{equation}
        \sum_{k=1}^{\infty} \psi_k \circ \phi_k(a) =  \tilde{\Phi}\left( \sum_{k=1}^{\infty}E_{kk}\otimes\phi_k(a)\right) = \varphi (a)
    \end{equation}
    where the left-most series converges in norm, for every $a\in\mathscr{A}$, by boundedness of $\tilde{\Phi}$ and norm convergence of \eqref{eq:xiInfinite}.
\end{proof}

\subsubsection{Properties}

\begin{theorem}\label{thm:DphiContNormClosed}
    The set $\gdec{\phi}{\mathscr{A}}{\hilbert}$ is closed in the BW topology in $B(\mathscr{A},B(\hilbert))$ under assumptions of Theorem \ref{thm:GenDecEquivCond}.
\end{theorem}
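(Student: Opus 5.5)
The plan is to repeat the argument of Theorem \ref{thm:DphiFinClosed}, now using the characterization of Theorem \ref{thm:GenDecEquivCond} in place of Theorem \ref{thm:FinDecEquivCond}. The point is that $\gdec{\phi}{\mathscr{A}}{\hilbert}$ is described entirely by the positivity conditions \eqref{eq:OnlyIfAssumInfinite}. Each of these conditions is preserved under pointwise weak operator limits, so the set they cut out should be BW closed.

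Let $(\varphi_\alpha)_\alpha$ be a bounded net in $\gdec{\phi}{\mathscr{A}}{\hilbert}$ converging in the BW topology to some $\varphi\in B(\mathscr{A},B(\hilbert))$. Fix $n\in\naturals$ and $[a_{ij}]_{i,j}\in\Gamma_n^+(\phi)$. The first step is entrywise convergence. Testing against vectors $x=\sum_i e_i\otimes x_i$ and $y=\sum_j e_j\otimes y_j$ in $\hilbert^n$, we have
\begin{equation*}
\iprod{x}{[\varphi_\alpha(a_{ij})]_{i,j}\,y}=\sum_{i,j}\iprod{x_i}{\varphi_\alpha(a_{ij})y_j}.
\end{equation*}
This is a finite sum of terms, each of which converges because of the BW convergence of $\varphi_\alpha$ at the $n^2$ elements $a_{ij}$. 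Hence $[\varphi_\alpha(a_{ij})]_{i,j}\to[\varphi(a_{ij})]_{i,j}$ in the weak operator topology of $\matra{n}{B(\hilbert)}\simeq B(\hilbert^n)$.

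Next, the "if" direction of Theorem \ref{thm:GenDecEquivCond} applies to each $\varphi_\alpha$, since these are $\phi$-decomposable. It gives $[\varphi_\alpha(a_{ij})]_{i,j}\in\matra{n}{B(\hilbert)}^+$ for every $\alpha$. The positive cone is weak operator closed, because $T\geq 0$ if and only if $\iprod{x}{Tx}\geq 0$ for all $x$. Therefore $[\varphi(a_{ij})]_{i,j}$ is positive. As $n$ and $[a_{ij}]$ were arbitrary, $\varphi$ satisfies \eqref{eq:OnlyIfAssumInfinite}. The standing hypotheses ($\mathscr{A}$ unital, $\phi\in c_0(B(\mathscr{A}))$, $\mathscr{X}$ a C*-algebra) depend only on $\phi$ and not on $\varphi$. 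So the converse direction of Theorem \ref{thm:GenDecEquivCond} applies, and $\varphi\in\gdec{\phi}{\mathscr{A}}{\hilbert}$.

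The only delicate step is confirming that the limit $\varphi$ meets every hypothesis needed in the converse direction of Theorem \ref{thm:GenDecEquivCond}. One needs $\varphi$ to be a bounded linear map; this follows from the definition of the BW topology on $B(\mathscr{A},B(\hilbert))$, and by the uniform boundedness principle when the net is bounded. Inside that proof, one also needs the induced map $\Phi$ on $\mathscr{X}$ to be well defined and completely bounded. Well-definedness comes from the kernel argument given before Lemma \ref{lemma:PhiCP}, which uses only \eqref{eq:OnlyIfAssumInfinite}. Complete boundedness is automatic once $\Phi$ is c.p.\ on the C*-algebra $\mathscr{X}$. Since these points hold for any $\varphi$ satisfying \eqref{eq:OnlyIfAssumInfinite}, I expect no genuine obstacle beyond noting them explicitly.
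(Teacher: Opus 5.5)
Your proposal is correct and is essentially the paper's proof: you rerun the entrywise weak-operator argument of Theorem \ref{thm:DphiFinClosed}, use weak-operator closedness of $\matra{n}{B(\hilbert)}^+$, and apply both directions of Theorem \ref{thm:GenDecEquivCond}, exactly as the paper indicates. One minor point: boundedness of the limit needs no uniform boundedness principle, since $\|\varphi_\alpha\|\leqslant M$ already gives $|\iprod{x}{\varphi(a)y}|\leqslant M\|a\|\|x\|\|y\|$.
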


\begin{proof}
It suffices to reapply the reasoning from Theorem \ref{thm:DphiFinClosed} for weak operator convergence and then employ Theorem \ref{thm:GenDecEquivCond} to show the closedness.
\end{proof}

\begin{theorem}\label{thm:DphiCountMapCone}
    The set $\gdec{\phi}{\mathscr{A}}{\hilbert}$ is a left mapping cone in $B(\mathscr{A},B(\hilbert))$.
\end{theorem}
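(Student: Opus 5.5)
The plan mirrors the proof of Theorem \ref{thm:DphiFinMapCone}, now under the assumptions of Theorem \ref{thm:GenDecEquivCond}: $\mathscr{A}$ unital, $\phi\in c_0(B(\mathscr{A}))$ a sequence of *-maps, and $\mathscr{X}=\xi(\mathscr{A})$ a C*-algebra. Three properties of a left mapping cone need checking.

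First, the convex cone structure is Theorem \ref{thm:DphiConvCone}. That result holds for any vector topology $\tau$, in particular the norm topology fixed in this section.

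Second, for left invariance take $\varphi=\sum_{k=1}^\infty \psi_k\circ\phi_k\in\gdec{\phi}{\mathscr{A}}{\hilbert}$, converging pointwise in norm, and a c.p.\ map $\psi\in\cpe{B(\hilbert)}$. Since $\psi$ is bounded, it is norm continuous, so for each $a\in\mathscr{A}$
\begin{equation}
    \psi\circ\varphi(a)=\psi\Big(\lim_{n\to\infty}\sum_{k=1}^{n}\psi_k\circ\phi_k(a)\Big)=\lim_{n\to\infty}\sum_{k=1}^{n}(\psi\circ\psi_k)\circ\phi_k(a)
\end{equation}
in norm. Each $\psi\circ\psi_k$ is c.p.\ from $\mathscr{A}$ to $B(\hilbert)$, because compositions of c.p.\ maps are c.p. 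Hence $\psi\circ\varphi\in\gdec{\phi}{\mathscr{A}}{\hilbert}$ with the same sequence $\phi$. Alternatively, one can check that condition \eqref{eq:OnlyIfAssumInfinite} is preserved by $\psi$, since $\id{n}\otimes\psi$ is positive, and then invoke Theorem \ref{thm:GenDecEquivCond}. This route also covers the case where one wants to avoid discussing convergence.

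Third, BW closedness is exactly Theorem \ref{thm:DphiContNormClosed}.

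The only delicate point is properness of the cone, if this is demanded in the definition of a mapping cone. This means showing $\gdec{\phi}{\mathscr{A}}{\hilbert}\cap(-\gdec{\phi}{\mathscr{A}}{\hilbert})=\{0\}$. I expect this to be the main obstacle, and I would handle it through the characterization in Theorem \ref{thm:GenDecEquivCond}. If both $\varphi$ and $-\varphi$ satisfy \eqref{eq:OnlyIfAssumInfinite}, then $\varphi$ vanishes on every $a\in\Gamma_1^+(\phi)$. One then needs $\Gamma_1^+(\phi)$ to span enough of $\mathscr{A}$ modulo $\bigcap_k\ker\phi_k$. Since $\mathscr{X}$ is a C*-algebra, every element of $\mathscr{X}$ is a combination of positive elements $\xi(a)$ with $a\in\Gamma_1^+(\phi)$. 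Since $\varphi=\Phi\circ\xi$ is well defined, it follows that $\Phi=0$ on $\mathscr{X}$, hence $\varphi=0$.

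If properness is not part of the intended notion, as with the finite case in Theorem \ref{thm:DphiFinMapCone}, the proof reduces to the three cited ingredients.
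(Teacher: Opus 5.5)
Your proof is correct and follows the paper's: the cone structure comes from Theorem \ref{thm:DphiConvCone}, left invariance from norm continuity of the c.p.\ map $\psi$ applied to the partial sums (each $\psi\circ\psi_k$ being c.p.), and BW closedness from Theorem \ref{thm:DphiContNormClosed}. Your extra pointedness argument is also valid, and it addresses the word ``proper'' in the paper's definition of a mapping cone, which the paper's own proof does not check: both $\pm\varphi$ satisfy \eqref{eq:OnlyIfAssumInfinite} by the easy direction of Theorem \ref{thm:GenDecEquivCond}, so $\varphi$ vanishes on $\Gamma_1^+(\phi)$, whose image under $\xi$ is exactly $\mathscr{X}^+$ and spans the C*-algebra $\mathscr{X}$, whence $\varphi=0$ via $\bigcap_k\ker\phi_k\subseteq\ker\varphi$.
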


\begin{proof}
    Let $\psi \in \cpe{B(\hilbert)}$. If $\sigma_n = \sum_{k=1}^{n}\psi_k \circ \phi_k$ converges pointwise in norm to $\varphi$, then $\left(\psi\circ\sigma_n\right)_n$ converges to $\psi\circ\varphi$ since $\psi$ is bounded and norm continuous; thus $\gdec{\phi}{\mathscr{A}}{\hilbert}$ is invariant with respect to compositions with c.p.~maps from the left. Theorems \ref{thm:DphiConvCone} and \ref{thm:DphiContNormClosed} then yield the claim.
\end{proof}

\subsection{Nonunital C*-algebra}
\label{sec:GDMnonunital}

It may be of interest to investigate the case when $\mathscr{A}$ has no unit; it turns out that the situation in this case is comparative. Below we present a result which is similar in essence to Theorem \ref{thm:GenDecEquivCond} of unital case, with the same restrictive assumption imposed on $\mathscr{X} = \xi(\mathscr{A})$, which here is also a C*-algebra. The scope of this result covers both countably and finitely decomposable cases.

\begin{theorem}\label{thm:GenDecEquivCondNonunital}
    Let $\mathscr{A}$ be a nonunital C*-algebra and let $\left(\phi_k\right)_{k} \in c_0 (B(\mathscr{A}))$ be a vanishing sequence of bounded *-maps on $\mathscr{A}$ (possibly of finite length). Denote $\xi(a) = \operatorname{diag}{\left\{ \phi_k (a) \right\}_{k}}$ and $\mathscr{X} = \xi(\mathscr{A})$. If $\mathscr{X}$ happens to be a C*-algebra then the claim of Theorem \ref{thm:GenDecEquivCond} applies.
\end{theorem}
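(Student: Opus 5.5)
The plan is to repeat the proof of Theorem \ref{thm:GenDecEquivCond} and check at each step where the unit of $\mathscr{A}$ was actually used. I expect that, once $\mathscr{X}$ is assumed to be a C*-algebra, the unit of $\mathscr{A}$ plays no role. The unit of the ambient algebra is supplied by forced unitization anyway.

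The sufficiency direction ("only if" for $\phi$-decomposable maps) goes through verbatim. For $[a_{ij}]_{i,j}\in\Gamma_n^+(\phi)$, the partial sums $\bigl[\sum_{k\le N}\psi_k\circ\phi_k(a_{ij})\bigr]_{i,j}$ are sums of the positive elements $(\id{n}\otimes\psi_k)([\phi_k(a_{ij})]_{i,j})$. They converge in norm, hence in the weak operator topology, and $\matra{n}{B(\hilbert)}^+$ is weak operator closed. No unit is used here.

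For necessity, I would take $\mathscr{W}=B(l^2)\hat{\otimes}_{\mathrm{min}}\mathscr{A}$ as before. This is now a nonunital C*-algebra, still containing the closed subspace $\mathscr{D}\cong c_0(\mathscr{A})$ of diagonal elements, and $\xi(a)=\sum_k E_{kk}\otimes\phi_k(a)$ converges in norm because $\phi\in c_0(B(\mathscr{A}))$. In the finite-length case the sum is trivially finite. I would then carry out the following steps in order.

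First, well-definedness of $\Phi$ on $\mathscr{X}$, via $\Phi\circ\xi=\varphi$. This uses the diagonal matrix $\operatorname{diag}\{c,-c,0,\dots\}$ to force $\bigcap_k\ker\phi_k\subseteq\ker\varphi$, which is unit-free.

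Second, complete positivity of $\Phi$, via the *-isomorphism $\eta$ exactly as in Lemma \ref{lemma:PhiCP}. Again no unit is needed.

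Third, extension of $\Phi$ to $\mathscr{W}$. Since $\mathscr{X}$ is a C*-algebra, $\Phi$ is c.p.\ on a C*-algebra and hence completely bounded. Lemma \ref{lemma:Phi0ucp} then gives the c.p.\ extension $\Phi^\sim(x,z)=\Phi(x)+z\cbnorm{\Phi}I$ on the operator system $\mathscr{X}\oplus\complexes 1$ inside $\mathscr{W}^\sim=\mathscr{W}\oplus\complexes 1$. Arveson's theorem yields $\Psi^\sim$ on $\mathscr{W}^\sim$, and I would restrict it to $\tilde\Phi=\Psi^\sim(\cdot,0)$ on $\mathscr{W}$.

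Fourth, Stinespring's theorem for the c.p.\ map $\tilde\Phi$ on the (nonunital) C*-algebra $\mathscr{W}$, giving $\tilde\Phi=V^\hadj\pi(\cdot)V$. I would then set $\pi_k(a)=\pi(E_{kk}\otimes a)$ and $\psi_k=V^\hadj\pi_k(\cdot)V$. These are *-homomorphisms and c.p.\ maps respectively, because $a\mapsto E_{kk}\otimes a$ is a *-homomorphism $\mathscr{A}\to\mathscr{W}$. Norm continuity of $\tilde\Phi$ then gives $\sum_k\psi_k\circ\phi_k(a)=\tilde\Phi(\xi(a))=\varphi(a)$, with norm convergence.

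The main obstacle is the unitization step. Two points need care. The first is that $\mathscr{X}\oplus\complexes 1$ is genuinely an operator system in $\mathscr{W}^\sim$, i.e.\ that the sum is direct. This holds because $\mathscr{W}$ is nonunital here (no unit in $\mathscr{A}$), so $1\notin\mathscr{W}\supseteq\mathscr{X}$. It is in fact cleaner than in the unital case. The second is that Lemma \ref{lemma:Phi0ucp} requires complete positivity of $\Phi$ on a nonunital C*-algebra, which holds once $\mathscr{X}$ is a C*-algebra. One should also verify that Stinespring's theorem applies in the nonunital setting. I would cite the standard version for c.p.\ maps on arbitrary C*-algebras, or alternatively apply Stinespring to $\Psi^\sim$ on $\mathscr{W}^\sim$ and restrict the resulting *-homomorphism to $\mathscr{W}$. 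The second route avoids the issue entirely.
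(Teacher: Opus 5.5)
Your proposal is correct and follows essentially the same route as the paper: the unit-free sufficiency argument, then $\Phi$ defined by $\Phi\circ\xi=\varphi$ and shown to be c.p.\ via $\eta$, the unitization $\Phi^\sim(x,z)=\Phi(x)+z\cbnorm{\Phi}I$ from Lemma \ref{lemma:Phi0ucp}, Arveson's extension to $\mathscr{W}^\sim$, restriction to $\mathscr{W}$, and Stinespring with $\pi_k(a)=\pi(E_{kk}\otimes a)$. Your suggestion to apply Stinespring to $\Psi^\sim$ on the unital algebra $\mathscr{W}^\sim$ and then restrict is a sensible bit of care that the paper leaves implicit.
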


\begin{proof}
    Sufficiency is shown in identical way, since it does not rely on unitality of $\mathscr{A}$. For necessity, the construction is almost the same with $\mathscr{W}$ and $\xi$ defined just like therein by \eqref{eq:WdefinitionSpatial} and \eqref{eq:xiInfinite}, respectively. Map $\Phi$, still given via equality $\varphi = \Phi\circ\xi$, remains c.p.~since unitality plays no role in the construction outlined in Lemma \ref{lemma:PhiCP}. The remaining reasoning then follows the same unitization trick with $\mathscr{X}\oplus\complexes 1$ being a C*-subalgebra, and an operator system, in $\mathscr{W}^\sim = \mathscr{W}\oplus \complexes 1$. Map $\Phi$ again extends, with the help of Lemma \ref{lemma:Phi0ucp}, to a c.p.~map $\Phi^\sim : \mathscr{X}^\sim \oplus \complexes 1 \to B(\hilbert)$ defined by
    \begin{equation}
        \Phi^{\sim} ((x,z)) = \Phi_0 (x) + z \cbnorm{\Phi} I,
    \end{equation}
    where $I$ is the identity operator in $B(\hilbert)$ and $\cbnorm{\Phi}$ is a CB norm of $\Phi$. This in turn extends to a c.p.~map $\Psi^{\sim} : \mathscr{W}^\sim \to B(\hilbert)$ of the same norm by the Arveson's extension theorem. Again, $\tilde{\Phi} = \Psi^{\sim}((\cdot,0))$ extends $\Phi$. The remaining construction is like in the proof of Theorem \ref{thm:GenDecEquivCond}.
\end{proof}

As the final result of the article, we note the same claims of BW closedness and convex cone structure of the set $\gdec{\phi}{\mathscr{A}}{\hilbert}$, i.e.~Theorems \ref{thm:DphiContNormClosed} and \ref{thm:DphiCountMapCone}, apply in the nonunital case under assumptions of Theorem \ref{thm:GenDecEquivCondNonunital}.

\section{Acknowledgements}

The author acknowledges contribution from the GPT-5.2 by OpenAI and Gemini 2.5 Pro by Google in clarifying some convergence related issues in the proof of Theorem \ref{thm:GenDecEquivCond}, suggesting the strategy for showing Lemma \ref{lemma:Phi0ucp}, and performing a thorough proofreading of the early version of the manuscript.

\section{Summary}

We presented a notion of the countable decomposability of linear maps on C*-algebras and provided their general characterization in some chosen cases. Our findings may be regarded as an extension of results of \cite{Stormer_1982} towards more general decompositions with arbitrary number of terms. What is worth emphasizing, all the core theorems we provided remain consistent with the general spirit of the original Størmer's characterization of decomposability. Possible further research directions and open problems include, but are by no means limited to, the following:
\begin{enumerate}
    \item Extending the construction by considering weaker topologies of convergence of the decomposition \eqref{eq:GenDecDefinition}.
    \item Exploring some nontrivial choices of $\phi$ and verifying properties of cones $\gdec{\phi}{\mathscr{A}}{\hilbert}$ constructed in this manner.
    \item Replacing the complete positivity requirement for maps $\psi_k$ by $n$-positivity. This modification would lead to entirely different characterization theorems and result in a much broader notion of \emph{countable $n$-decomposability}.
    \item Finally, establishing a stronger connection to positive maps. As the very notion of countable decomposability originates from decomposable positive maps, this direction of investigation seems the most natural one. It is worth verifying if our approach is able to give any more insight into the general structure of positive maps itself, at least in a matrix algebra case.
\end{enumerate}

\appendix

\section{}

\begin{lemma}\label{lemma:Phi0ucp}
    Let $\mathscr{A}$ be a nonunital C*-algebra, $\mathscr{M} \subset \mathscr{A}$ a C*-subalgebra, $H$ a Hilbert space and $T : \mathscr{M}\to B(H)$ a c.p.~map. Then, its extension $T^\sim : \mathscr{M} \oplus \complexes 1 \to B(H)$ given via
    \begin{equation}
        T^\sim ((a,z)) = T(a) + z \cbnorm{T} I,
    \end{equation}
    for $I$ the identity operator on $H$ and $\cbnorm{T}$ the CB norm of $T$, is completely positive.
\end{lemma}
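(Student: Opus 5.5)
The plan is to realize $T^\sim$ as the sum of two completely positive maps on the unitization $\mathscr{M}\oplus\complexes 1$, using a Stinespring dilation of $T$. Throughout, $\mathscr{M}\oplus\complexes 1$ is regarded as the C*-subalgebra of the forced unitization $\mathscr{A}\oplus\complexes 1$. This is what makes the map $(a,z)\mapsto z$ a well-defined character there, even if $\mathscr{M}$ happens to possess a unit of its own.

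\emph{Step 1 (dilation and norms).} Since $T$ is c.p.\ on the C*-algebra $\mathscr{M}$, Stinespring's theorem \cite{Stinespring_1955}, in its nonunital form, gives a Hilbert space $\mathcal{K}$, a *-representation $\pi:\mathscr{M}\to B(\mathcal{K})$ and a bounded $V:H\to\mathcal{K}$ with $T(a)=V^\hadj\pi(a)V$. Here $\pi$ may be taken nondegenerate, e.g.\ by restricting to the essential subspace and compressing $V$ accordingly. Next I would record the norm estimates. If $(e_\lambda)$ is an approximate unit of $\mathscr{M}$, then $\pi(e_\lambda)\to I_\mathcal{K}$ strongly, so $T(e_\lambda)\to V^\hadj V$ weakly, and hence $\|V\|^2\le\|T\|$. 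Conversely $\|T\|\le\|V\|^2$ is obvious from the dilation. The same dilation gives $\id{n}\otimes T = (I_n\otimes V)^\hadj(\id{n}\otimes\pi)(\cdot)(I_n\otimes V)$, so $\cbnorm{T}\le\|V\|^2=\|T\|\le\cbnorm{T}$. Thus $\cbnorm{T}=\|V\|^2$, and in particular $P:=\cbnorm{T}I-V^\hadj V\geq 0$.

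\emph{Step 2 (splitting $T^\sim$).} Extend $\pi$ to the unital *-homomorphism $\pi^\sim((a,z))=\pi(a)+zI_\mathcal{K}$ on $\mathscr{M}\oplus\complexes 1$. Let $\chi((a,z))=z$ be the canonical character. Then
\begin{equation*}
T^\sim((a,z)) = V^\hadj\pi^\sim((a,z))V + \chi((a,z))\,P .
\end{equation*}
The first term is c.p.\ as a compression of a *-homomorphism. The second is c.p.\ because $\chi$ is a *-homomorphism into $\complexes$, hence c.p., and $z\mapsto zP$ with $P\ge0$ is c.p.\ from $\complexes$. A sum of c.p.\ maps is c.p., so $T^\sim$ is c.p.

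\emph{Main obstacle.} The only delicate point is Step 1, specifically the identity $\cbnorm{T}=\|V\|^2$ (equivalently $V^\hadj V\le\cbnorm{T}I$) in the absence of a unit. I would handle this through nondegeneracy of $\pi$ and the approximate-unit limit as described. Alternatively, it suffices to note $\|V\|^2\le\cbnorm{T}$ directly: $V^\hadj\pi(e_\lambda)V=T(e_\lambda)\le\|T\|I$, then pass to the weak limit using $\pi(e_\lambda)\to I_\mathcal{K}$ strongly. This inequality alone already gives $P\ge0$, which is all the argument needs.
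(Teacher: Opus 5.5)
Your argument is correct, but it takes a different route from the paper's proof.

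You work with dilations. A nondegenerate Stinespring representation $T=V^\hadj\pi(\cdot)V$, together with an approximate unit, gives $\|V\|^2\le\|T\|$, hence $\cbnorm{T}=\|T\|=\|V\|^2$. Then the identity $T^\sim=V^\hadj\pi^\sim(\cdot)V+\chi(\cdot)\,(\cbnorm{T}I-V^\hadj V)$ writes $T^\sim$ as a compression of a unital *-homomorphism plus a character multiplied by the positive operator $\cbnorm{T}I-V^\hadj V$.

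The paper never dilates $T$; it works directly with positive block matrices:
\begin{itemize}
\item it takes a positive $A=X+\Lambda\otimes\unit_\sim$ in $\matra{n}{\mathscr{M}^\sim}$ and uses the quotient character to get $\Lambda\ge 0$;
\item it replaces $\Lambda$ by $\Lambda_\epsilon=\Lambda+\epsilon I_n$ and conjugates by $\Lambda_\epsilon^{-1/2}\otimes\unit_\sim$ to normalize the scalar part to $I_n$;
\item it splits the resulting $Y_\epsilon\in\matra{n}{\mathscr{M}}$ as $Y_\epsilon^+-Y_\epsilon^-$ with $\|Y_\epsilon^-\|\le 1$, so that $0\le T_n(\hat{y}_\epsilon^-)\le\cbnorm{T}$ follows from the definition of the CB norm;
\item it undoes the conjugation and lets $\epsilon\to 0$.
\end{itemize}

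Your version is shorter and more conceptual. It proves, rather than presupposes, that $\cbnorm{T}$ is finite and equal to $\|T\|$. It needs no perturbation or limiting argument. It also makes plain that any constant $c\ge\|T\|$ could replace $\cbnorm{T}$.

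The paper's version is more elementary: it uses only continuous functional calculus in $\matra{n}{\mathscr{M}^\sim}$ and no dilation theory. This matters for you, because your only external input is the nonunital Stinespring theorem. Some standard proofs of that theorem first extend $T$ to the unitization, which is exactly the statement being proved here. You should therefore rely on a version built directly on the completion of $\mathscr{M}\odot H$ using an approximate unit, which also gives $\|V\|^2\le\|T\|$ immediately.
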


\begin{proof}
    Denote $\mathscr{M}^\sim = \mathscr{M}\oplus\complexes 1$. We must show $T_{n}^{\sim} = \id{n}\otimes T^\sim$ is positive on $\matra{n}{\mathscr{M}^\sim}$, for all $n\in\naturals$. Let matrices $\left[x_{ij}\right]_{i,j} \in \matra{n}{\mathscr{M}}$ and $\Lambda \in \matr{n}$ be arbitrary. Then, any element $A \in \matra{n}{\mathscr{M}^\sim}$ admits a matrix representation
    \begin{equation}
        A = X + \Lambda \otimes \unit_\sim ,
    \end{equation}
    where $X = \left[(x_{ij},0)\right]_{i,j}$ and $\unit_\sim = (0,1)$ is a unit in $\mathscr{A}^\sim = \mathscr{A}\oplus \complexes 1$, the forced unitization of $\mathscr{A}$. Assume $A$ is a positive element in $\matra{n}{\mathscr{M}^\sim}$. Let $q : \mathscr{A}^\sim \to \complexes$ be a quotient map, $q ((a,z)) = z$. One checks $q$ is a *-homomorphism and a c.p.~map in consequence; hence $q_n = \id{n}\otimes q$ is positive. Since $q_n$ acts by restricting $A$ to its scalar part, $q_n (A) = \Lambda$, it must hold that $\Lambda$ is positive semidefinite. Define $\Lambda_\epsilon = \Lambda + \epsilon I_n$ and $A_\epsilon = X + \Lambda_\epsilon \otimes \unit_\sim$ for $\epsilon > 0$ and $I_n$ an identity matrix in $\matr{n}$. Since $A_\epsilon - A = \epsilon I_n \otimes \unit_\sim$ we see $A_\epsilon > A$ and so $\left(A_\epsilon\right)_\epsilon$ is a net of strictly positive elements, approximating $A$ in norm as $\epsilon \to 0$. Element $A_\epsilon$ is positive if and only if
    \begin{equation}
        (\Lambda_{\epsilon}^{-\frac{1}{2}}\otimes \unit_\sim)A_\epsilon (\Lambda_{\epsilon}^{-\frac{1}{2}}\otimes \unit_\sim) = Y_\epsilon + I_n \otimes \unit_\sim
    \end{equation}
    is positive as well, where we introduced
    \begin{equation}
        Y_\epsilon = (\Lambda_{\epsilon}^{-\frac{1}{2}}\otimes \unit_\sim)X (\Lambda_{\epsilon}^{-\frac{1}{2}}\otimes \unit_\sim).
    \end{equation}
    Positivity of $Y_\epsilon + I_n \otimes \unit_\sim$ is equivalent to $Y_\epsilon$ being self-adjoint and the spectrum $\spec{Y_\epsilon + I_n \otimes \unit_\sim}$ being nonnegative, i.e.
    \begin{equation}
        \spec{Y_\epsilon} \subset [-1,\infty ).
    \end{equation}
    Denote with $\spec{Y_\epsilon}_\pm$ the positive (nonnegative) and negative part of the spectrum, respectively. As $\mathscr{M}$ is a C*-algebra, it admits a well-defined functional calculus; the same is then true for $\mathscr{M}^\sim$ and $\matra{n}{\mathscr{M}^\sim}$. Let then $E$ be the spectral measure of $Y_\epsilon$. Define
    \begin{equation}
        Y_{\epsilon}^{\pm} = \int\limits_{\spec{Y_\epsilon}_\pm} |\lambda| \, dE(\lambda),
    \end{equation}
    so that $Y_{\epsilon}^{\pm}$ are positive, satisfy $Y_{\epsilon}^{+} Y_{\epsilon}^{-} = 0$ and $Y_\epsilon = Y_{\epsilon}^{+} - Y_{\epsilon}^{-}$. Since $Y_{\epsilon}^{-}$ is self-adjoint and its spectrum $\spec{Y_\epsilon}_-$ is contained in $[-1,0]$ we see $\| Y_\epsilon \| \leqslant 1$. It is an easy exercise to show that due to its definition, $Y_{\epsilon}^{\pm}$ have a structure $\left[(y_{ij}^{\pm}(\epsilon),0)\right]_{i,j}$ for some $\hat{y}_{\epsilon}^{\pm} = \left[y_{ij}^{\pm}(\epsilon)\right]_{i,j}\in \matra{n}{\mathscr{M}}$, so are contained in a subspace of $\matra{n}{\mathscr{M}^\sim}$ canonically isometrically isomorphic to $\matra{n}{\mathscr{M}}$; in particular $\left\|Y_{\epsilon}^{\pm}\right\| = \left\| \hat{y}_{\epsilon}^{\pm} \right\|$. Acting with $T_{n}^{\sim}$ we obtain
    \begin{equation}\label{eq:TnSimOnYplusUnit}
        T_{n}^{\sim}(Y_\epsilon + I_n \otimes \unit_\sim) = T_{n}^{\sim}(Y_{\epsilon}^{+}) - T_{n}^{\sim}(Y_{\epsilon}^{-}) + \cbnorm{T} I_n \otimes I,
    \end{equation}
    since $T_{n}^{\sim}(I_n \otimes \unit_\sim) = \left[ T^\sim ((0,1)) \right]_{i,j} = \cbnorm{T}\left[\delta_{ij}I\right]_{i,j}$ by definition of $T^\sim$. Notice
    \begin{equation}
        T_{n}^{\sim}(Y_{\epsilon}^{-}) = \left[ T^\sim ((y_{ij}^{-}(\epsilon),0)) \right]_{i,j} = \left[T(y_{ij}^{-}(\epsilon))\right]_{i,j} = T_n (\hat{y}^{-}_\epsilon)
    \end{equation}
    for $T_n = \id{n} \otimes T : \matra{n}{\mathscr{M}}\to \matra{n}{B(H)}$. Since we assumed $T$ was completely bounded and $\left\| Y_{\epsilon}^{-}\right\| \leqslant 1$, for any $n\in\naturals$ we have en estimation
    \begin{equation}
        \left\|T_{n}^{\sim}(Y_{\epsilon}^{-}) \right\| \leqslant \cbnorm{T} \left\|\hat{y}^{-}_{\epsilon}\right\| = \cbnorm{T} \left\| Y_{\epsilon}^{-} \right\| \leqslant \cbnorm{T}.
    \end{equation}
    Both elements $T_{n}^{\sim}(Y_{\epsilon}^{+})$, $T_{n}^{\sim}(Y_{\epsilon}^{-})$ are positive by the assumed complete positivity of $T$. In particular,
    \begin{equation}
        \spec{T_{n}^{\sim}(Y_{\epsilon}^{-})} \subset [0, \cbnorm{T}]
    \end{equation}
    which yields, with the use of the spectral theorem, that the spectrum
    \begin{equation}
        \spec{\cbnorm{T}I_n \otimes I - T_{n}^{\sim}(Y_{\epsilon}^{-})} = \{\cbnorm{T} - \lambda : \lambda \in [0, \cbnorm{T}] \}
    \end{equation}
    is contained in $[0,\cbnorm{T}]$, i.e.~$\cbnorm{T}I_n \otimes I - T_{n}^{\sim}(Y_{\epsilon}^{-})$ is positive. In consequence, the right hand side of \eqref{eq:TnSimOnYplusUnit} is positive for all $n$ and all $\epsilon > 0$. Denote $\mathcal{Y}_\epsilon = T_{n}^{\sim}(Y_\epsilon + I_n \otimes \unit_\sim)$. One checks with a direct computation that any map $\phi_n = \id{n}\otimes\phi$, where $\phi$ acts on $\mathscr{M}^\sim$, satisfies the intertwining relation
    \begin{equation}
        \phi_n \left((K \otimes \unit_\sim)X(L \otimes \unit_\sim)\right) = (K \otimes \unit_\sim) \phi_n(X) (L \otimes \unit_\sim)
    \end{equation}
    for any $K,L \in \matr{n}$ and $X \in \matra{n}{\mathscr{M}^\sim}$; this yields
    \begin{equation}
        T_{n}^{\sim} (A_\epsilon) = \left(\Lambda_{\epsilon}^{\frac{1}{2}}\otimes\unit_\sim\right) \mathcal{Y}_\epsilon \left(\Lambda_{\epsilon}^{\frac{1}{2}}\otimes\unit_\sim\right)
    \end{equation}
    defines a net of positive elements in $\matra{n}{B(H)}$. Net $\left(A_\epsilon\right)_\epsilon$ was norm convergent to $A$, so we have $\| A_\epsilon - A\| < \frac{\delta}{\|T^{\sim}_{n}\|}$ for some $\delta > 0$ and $\epsilon$ small enough; hence $\| T_{n}^{\sim} (A_\epsilon - A) \| < \delta$ and $\left(T_{n}^{\sim} (A_\epsilon)\right)_\epsilon$ converges to $T_{n}^{\sim}(A)$ in norm as $\epsilon \to 0$. Since $\matra{n}{B(H)}^+$ is norm closed, $T_{n}^{\sim}(A)$ must be positive, for any $A \in \matra{n}{\mathscr{M}^\sim}^+$ and any $n$. This shows $T^\sim$ is completely positive.
\end{proof}

\bibliographystyle{unsrt}
\bibliography{GeneralizedDecomposableMaps}

\end{document}